\documentclass[11pt,a4paper]{article}

\usepackage[utf8]{inputenc}
\usepackage[english]{babel}
\usepackage{amsmath}
\usepackage{amsthm}
\usepackage{amsfonts}
\usepackage{amssymb,bm}
\usepackage[pdftex]{color,graphicx}
\usepackage{mathrsfs}  
\usepackage[expansion]{microtype}
\usepackage{ stmaryrd }

\usepackage[pdftex,colorlinks=true]{hyperref}

\hypersetup{
    pdfauthor   = {Enriquez Faraud Ménard Noiry},%
    pdftitle    = {DFS supercritical Configuration Model}}

\topmargin=-1.5cm   
\textheight=23cm
\textwidth=16cm
\oddsidemargin=0cm
\evensidemargin=0cm

\usepackage[font={small,sf}, labelfont={sf,bf}, margin=1cm]{caption}

\newtheorem{lemme}{Lemma}
\newtheorem{prop}{Proposition}

\newtheorem{theorem}{Theorem}

\newtheorem{remark}{Remark}

\newcommand\E{ \mathbb{E} }

\newcommand\geom{\mathfrak{e}}

\title{Long induced paths in a configuration model}
 \author{Nathana\"el Enriquez, Gabriel Faraud, Laurent Ménard and Nathan Noiry}
\date{}

\begin{document}
\maketitle

\abstract{
In an article published in 1987 in Combinatorica \cite{MR918397}, Frieze and Jackson established a lower bound on the length of the longest induced path (and cycle) in a sparse random graph. Their bound is obtained through a rough analysis of a greedy algorithm. In the present work, we provide a sharp asymptotic for the length of the induced path constructed by their algorithm. To this end, we introduce an alternative algorithm that builds the same induced path and whose analysis falls into the framework of a previous work by the authors on depth-first exploration of a configuration model \cite{EFMN}. We also analyze an extension of our algorithm that mixes depth-first and breadth-first explorations and generates $m$-induced paths.
\bigskip

\noindent{\slshape\bfseries Keywords.} Configuration Model, Induced Paths, Depth First Search Algorithm.
\bigskip

\noindent{\slshape\bfseries 2010 Mathematics Subject
Classification.} 60K35, 82C21, 60J20, 60F10.
}

\section{Introduction}

In this paper, we are interested in the existence of long induced cycles and long induced paths in random (multi-)graphs. An induced path of length $k \geq 1$ in a (multi-)graph $G=(V,E)$ is a sequence $v_1, \ldots, v_k$ of distinct vertices of the graph such that, for any $1 \leq i,j \leq k$, $v_i$ and $v_j$ are neighbors in the graph $\mathrm{G}$ (\emph{i.e.} $\{v_i,v_j\} \in E$) if and only if $|i-j|=1$. Similarly, an induced cycle is a cycle of distinct vertices of the graph such that two non-consecutive vertices are not linked by an edge. Induced cycles are often also called {\it holes} of the graph. Our main result, Theorem \ref{theo:main}, is a lower bound on the length of induced paths and cycles for random graphs constructed by the configuration model in the supercritical sparse regime. This length is linear in the size of the graph.
\bigskip

The configuration model was introduced by Bollob\'as in 1980 \cite{B} and can be defined as follows:
Let $N\geq 1$ be an integer and let $d_1, \ldots, d_N \in \mathbb{Z}_+$ be such that $d_1 + \cdots +d_N$ is even. We interpret $d_i$ as a number of half-edges attached to vertex $i$. Then, the configuration model $\mathscr{C}( (d_i)_{1 \leq i \leq N})$ associated to the sequence $(d_i)_{1 \leq i \leq N}$ is the random multigraph with vertex set $\{1, \ldots ,N\}$ obtained by a uniform matching of these half-edges. If $d_1 + \cdots + d_N$ is odd, we change $d_N$ into $d_N+1$ and do the same construction.

We are going to study sequences of configuration models whose degree sequences $\mathbf{d}^{(N)}$ satisfy classical hypothesis. The first assumption corresponds to the sparsity of the graphs:
\begin{itemize}
\item For every $N\geq 1$, $\mathbf{d}^ {(N)} = (d_1^{(N)} , \ldots, d_N^{(N)}) \in \mathbb{Z}_+^N$, and there exists $\bm \pi$ a probability measure on $\mathbb{Z}_+$ with finite second moment such that
\[  \forall k \geq 0, \quad \frac{1}{N} \sum\limits_{ j=1}^N \mathbf{1}_{ d_j^{(N)} = k}  \underset{ N \rightarrow +\infty}{ \longrightarrow} \bm \pi( \{k\}).   \tag{{\bf A1}}\]
\end{itemize}

Such graphs are known to exhibit a phase transition for the existence of a unique macroscopic connected component depending on the properties of the limiting distribution $\bm \pi$, see \cite{MR1,MR2,MR2490288}. Our second assumption is that our configuration models are supercritical, meaning that they have a unique giant component, so that long induced paths and cycles can have linear length. To state this assumption, we need the following notation: for every $s\in [0,1]$ define
\[
f_{\bm \pi}(s) = \sum\limits_{i \geq 0} \bm \pi(\{i\}) s^i \quad \text{and} \quad \hat f_{\bm \pi}(s) =  \frac{f'_{\bm \pi}(s)}{f'_{\bm \pi}(1)}.
\]
Our second assumption is then:

\begin{itemize}
\item The probability measure $\bm \pi$ is supercritical in the following sense:
\[   \hat f_{\bm \pi}'(1)  >1 .  \tag{{\bf A2}}\]
\end{itemize}

Denote by $\rho_{\bm \pi}$ the smallest positive solution in $(0,1]$ of the fixed point equation
\begin{equation} \label{eq:rho2}
1 - \rho_{\bm \pi} =  \hat f_{\bm \pi}(1-\rho_{\bm \pi})
\end{equation}
and write
\begin{equation} \label{eq:xi}
\xi_{\bm \pi} = 1 - f_{\bm \pi}(1-\rho_{\bm \pi}).
\end{equation}
Under Assumptions ({\bf A1}) and ({\bf A2}), the giant connected component of $\mathscr{C}( \mathbf{d}^ {(N)})$ has size $\xi_{\bm \pi} N + o(N)$ with probability tending to $1$ as $N \to \infty$.

Finally, we make two additional technical assumptions on the degree sequences $( \mathbf{d}^ {(N)})_N$:
\begin{itemize}
\item The following convergence holds:
\[   \lim\limits_{ N \rightarrow +\infty}   \frac{{d_1^{(N)}}^2 + \cdots + {d_N^{(N)}}^2}{N} = \sum\limits_{k \geq 0} k^2 \bm \pi(\{k\}).  \tag{{\bf A3}} \]
\item There exists $\gamma >2$ such that:
\[    \max \left\{ d_1^{(N)}, \ldots, d_N^{(N)}  \right\} \leq N^{1/\gamma} . \tag{{\bf A4}} \]
\end{itemize}

\bigskip

We are now ready to state our main result:

\begin{theorem} \label{theo:main}
Let $\bm \pi$ be a probability measure on $\mathbb{Z}_+$ with generating series $f_{\bm \pi}$ and $( \mathscr{C}(\mathbf{d}^{(N)}) )_{ N \geq 1}$ be a configuration model with supercritical asymptotic degree distribution $\bm \pi$ satifsying assumptions {\bf (A1)} {\bf (A2)}, {\bf (A3)} and {\bf (A4)}. Denote by $\mathcal{H}_N$ be the length of the longest induced cycle or induced path in $\mathscr{C}(\mathbf{d}^{(N)})$.

Let $\alpha_c$ be the smallest positive solution of the equation
\[
\frac{ f_{\bm \pi}^{''} \left( f_{\bm \pi}^{-1} (1 - \alpha)  \right) }{f_{\bm \pi}^{'} (1)} = 1.
\]
For every $\alpha \in [0,\alpha_c]$ and $s \in [0,1]$, we define the following functions:
\begin{align*}
g(\alpha,s) = \frac{1}{1-\alpha} f_{\bm \pi} \left( f_{\bm \pi}^{-1} (1 -\alpha) - (1-s) \frac{f_{\bm \pi}^{'} \left(  f_{\bm \pi}^{-1} (1- \alpha) \right) }{f_{ \bm \pi}'(1)}    \right)
\quad \text{and} \quad
\hat{g}(\alpha,s) = \frac{ \partial_s g(\alpha,s) }{\partial_s g(\alpha,1)}.
\end{align*}
There exists an implicit function $\alpha (\rho)$ defined on $[0,\rho_{\bm \pi}]$ such that $1-\rho = \hat{g}(\alpha(\rho),1-\rho)$.

Then,
\[ \forall \varepsilon>0, \quad \mathbb{P}\left(  \frac{\mathcal{H}_N}{N}  \geq
 \displaystyle{\int_0^{\rho_{\bm \pi}} \frac{u \, \alpha'(u)}{\partial_s \hat g (\alpha(u),1)} \mathrm{d}u}  - \varepsilon \right)  \underset{ N \rightarrow +\infty}{\longrightarrow} 1. \]
In addition, this bound still holds if we condition the graphs $\mathscr{C}(\mathbf{d}^{(N)})$ to be simple by standard arguments.
\end{theorem}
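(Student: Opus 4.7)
The plan is to execute the strategy announced in the introduction: exhibit a DFS-type algorithm that produces the same induced path as Frieze and Jackson's greedy procedure, and then analyze it using the fluid-limit machinery of \cite{EFMN}. The first step is to describe this alternative algorithm precisely. Its state consists of the current induced path $v_1,\dots,v_k$, a pool of active (unmatched) half-edges, and a distinguished stack at the current endpoint $v_k$. At each step we reveal the partner of a half-edge of $v_k$, chosen uniformly at random in the active pool; according to whether this partner belongs to an unexplored vertex, to an already-explored past vertex, or would create a chord with some earlier $v_i$, we either extend the path, kill both half-edges, or backtrack along the path. A careful bookkeeping verifies that this DFS variant produces the same induced path as the procedure of \cite{MR918397}, so it suffices to analyze its behavior on the configuration model.

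The second step is to identify the Markov chain governing the macroscopic state. The two fundamental variables are the rescaled time $\rho$ (fraction of vertices incorporated into the DFS exploration) and the rescaled number $\alpha$ of killed half-edges; together with the empirical degree distribution of the remaining vertices they form a closable system amenable to Wormald's differential equation method. Under assumptions \textbf{(A3)} and \textbf{(A4)} this system concentrates on a deterministic trajectory. The function $g(\alpha,s)$ in the statement arises as the normalized generating function of the remaining half-edge-biased degrees after killing a fraction $\alpha$ of the half-edges, so that $\hat g(\alpha,\cdot)$ plays the role of $\hat f_{\bm\pi}$ in the classical fixed-point equation \eqref{eq:rho2}. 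The implicit relation $1-\rho = \hat g(\alpha(\rho),1-\rho)$ expresses the balance between active and inactive half-edges in the DFS at time $\rho$, and a local computation gives the incremental path-growth rate $u\,\alpha'(u)/\partial_s\hat g(\alpha(u),1)$; integrating from $0$ up to the extinction time $\rho_{\bm\pi}$ of the associated branching approximation yields the announced length.

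The main technical difficulty is establishing concentration of the rescaled process around its ODE limit over the whole interval $[0,\rho_{\bm\pi}]$. Since $\bm\pi$ is only assumed to have finite second moment, the martingale estimates require careful control of the contribution of high-degree vertices, exploiting the uniform integrability provided by \textbf{(A3)} and the maximum-degree bound \textbf{(A4)}; this is exactly where the framework of \cite{EFMN} is essential. Once this concentration is in place, the transfer to the simple graph model is standard: under \textbf{(A1)}--\textbf{(A4)}, $\mathscr{C}(\mathbf{d}^{(N)})$ is simple with probability bounded away from zero, so any event of asymptotic probability $1$ under the multigraph law inherits the same limit under the simple graph conditioning.
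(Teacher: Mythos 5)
Your overall target is right (Wormald fluid limits for a DFS-type exploration of the configuration model, in the framework of \cite{EFMN}), but the algorithm you describe is not the one the paper analyzes, and that difference is precisely where the proof lives or dies. You describe a one-half-edge-at-a-time DFS in which, each time a new partner vertex is revealed, you check whether it creates a chord with an earlier spine vertex $v_i$ and if so prune and backtrack. That is essentially Frieze and Jackson's greedy procedure. The authors introduce a genuinely different algorithm: whenever the exploration reaches a new spine vertex $v$, it first reveals the \emph{entire} $1$-neighborhood $v_1,\dots,v_l$ of $v$ (a breadth-first step), removes all of these from the sleeping pool at once, and only then proceeds depth-first into $v_1$. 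This makes every ancestral line an induced path \emph{automatically}, with no chord detection and no pruning, because the later-discovered vertices $v_i^j$ are by construction sampled from a pool that excludes $v_1,\dots,v_l$. The same reveal step is what makes the graph $\mathscr{S}_k$ induced by sleeping vertices at each ladder time $T_k$ an honest configuration model on its own degree sequence, which is the structural fact the whole ladder-time / ODE analysis hinges on. If you run Frieze--Jackson's chord-detecting DFS directly, the remaining graph after visiting a spine vertex carries conditioning from the chord checks and is not a clean configuration model, so the renewal decomposition you are implicitly invoking does not go through. The claim that one can ``verify the DFS variant produces the same induced path as \cite{MR918397}'' and then analyze it directly inverts the paper's logic: the comparison with Frieze--Jackson (Section~\ref{sec:comp}) is a separate remark, not a step of the proof, and the two algorithms only agree asymptotically on the giant component, not exactly.

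Two further gaps worth flagging. First, you do not mention ladder times at all, yet the decomposition at the stopping times $T_k$ (together with the good event controlling component sizes) is the mechanism by which the contour process becomes a tractable Markov chain to which Wormald's method applies; without it, the state you propose (time, $\alpha$, and the empirical degree distribution) is not a closable system. Relatedly, in the paper $\alpha$ is the proportion of \emph{vertices} no longer sleeping (so $|S_n|\approx (1-\alpha)N$), not a rescaled count of killed half-edges, and $\rho$ is the survival probability of the associated branching process used to parametrize the limit curve, not a time variable. Second, the statement bounds $\mathcal H_N$, the longest induced \emph{cycle or path}; the paper has a separate argument (Section~\ref{sec:inducedpathcycle}) showing one can close the long induced path into an induced cycle of comparable length by tracking available half-edges near the bottom of the spine, and your proposal does not address this.
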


\bigskip

Although the formulation of Theorem \ref{theo:main} is implicit, explicit computations are easy in specific models. Indeed, for $d$-regular random graphs with $d \geq 3$ we find:
\[ \forall \varepsilon>0, \quad \mathbb{P}\left( \frac{\mathcal{H}_N}{N}  \geq \frac{d}{2(d-1)} \left( 1 - \int_0^1 \left(  \frac{1 - x^{\frac{1}{d-1}}}{1-x}  \right)^{\frac{2}{d-2}}  \mathrm{d}x - \varepsilon \right) \right)  \underset{ N \rightarrow +\infty}{\longrightarrow} 1 . \]
For Erd\H{o}s-Rényi random graphs with connection probability $c/N$ ($c>1)$, denoting by $\rho_c$ the smallest positive solution of $1 - \rho_c = \exp ( -c \rho_c )$, we find:
\begin{align*}
\forall \varepsilon>0, \quad  \mathbb{P} \left( \frac{\mathcal{H}_N}{N}  \geq 
\frac{\rho_c}{-\ln(1-\rho_c)} \left( \gamma +\rho_c + \ln(-\ln(1-\rho_c)) - \mathrm{Li}_2 (1-\rho_c) \right) - \varepsilon \right)  \underset{ N \rightarrow +\infty}{\longrightarrow} 1,
\end{align*}
where $\gamma \approx 0.577\ldots$ is Euler's constant and $\mathrm{Li}_2$ is the dilogarithm function.

\bigskip

To the best of our knowledge, the only other general bound on $\mathcal{H}_N$ is due to Frieze and Jackson in \cite{MR918397} in the setting where the degrees of the graph are bounded below by $3$ and uniformly bounded above but do not necessarily satisfy our assumptions. For instance their lower bound for $3$-regular graphs is approximately equal to $0.07$ while our bound is approximately equal to $0.45$. Their proof relies on a greedy algorithm which allows them to construct a macroscopic induced path in the graph. However, they do not establish the exact asymptotic length of this path, but rather a lower bound.
Building on this result they also obtained a lower bound on $\mathcal{H}_N / N$ for Erd\H{o}s-Rényi random graphs having large fixed averaged degree.

To prove Theorem \ref{theo:main}, we introduce a different algorithm, based on the depth first search algorithm, that is amenable to a detailed analysis with the framework developed in \cite{EFMN} by the authors. This allows us to exhibit an induced path with explicit macroscopic length. As we will see in Section \ref{sec:comp}, it turns out that both algorithms provide the same long induced path (and cycle) up to $o(N)$ vertices. It is worth noting that simultaneously with our work, an algorithm similar to ours and the corresponding induced paths are studied by Draganic, Glock and Krivelevich \cite{Kriv}.

\bigskip

The paper is organized as follows. In Section \ref{sec:algorithm} we present an algorithm that constructs a configuration model and spanning trees of its connected components for which ancestral lines form induced paths in the graph. We also state a result for the limiting profile of the spanning forest constructed by the algorithm in Theorem \ref{th:profile} from which Theorem \ref{theo:main} follows easily. In Section \ref{sec:analysis}, we give a detailed analysis of our algorithm using the framework of our previous works \cite{enriquez2017limiting,EFMN}. In Section \ref{sec:comp}, we show the induced path constructed by our algorithm is roughly the same as the induced path constructed by the algorithm of Frieze and Jackson \cite{MR918397}. Finally, in Section \ref{sec:mcycles}, we analyse an extension of our algorithm. This extension mixes depth-first and breadth-first explorations and constructs $m-$induced paths for any fixed $m$.

\bigskip

\noindent \textbf{Acknowledgments:}
The first author would like to thank the ANR grants MALIN  (Projet-ANR-16-CE93-0003) and PPPP (Projet-ANR-16-CE40-0016) for their financial support. The other three authors would like to thank the ANR grant ProGraM (Projet-ANR-19-CE40-0025) for its financial support. G.F. and L.M. also ackowledge the support of the Labex MME-DII (ANR11-LBX-0023-01).

\section{Constructing the graph while discovering induced paths} \label{sec:algorithm}
We now introduce an algorithm which, from the knowledge of the sequence of degree $\mathbf{d}^{(N)}$, simultaneously constructs a configuration model $\mathscr{C}(\mathbf{d}^{(N)})$ together with an exploration of it. The latter exploration, which is a modification of the depth-first exploration, is designed to discover long induced paths in the graph. At each step $n$ of the construction, we consider the following objects, defined by induction:
\begin{itemize}
\item $A_n$, the set of active vertices, is an ordered list of pairs $(v,\mathbf{m}_v)$ where $v$ is a vertex of $\mathrm{V}_N$ and $\mathbf{m}_v$ is an ordered list of elements of the form $(u,(u^1, \ldots,u^l))$ where $u$ is a vertex that will be matched to $v$ and $u^1, \ldots,u^l$ vertices that will be matched to $u$ during the exploration. 
\item $S_n$, the set of sleeping vertices, which consists of vertices that do not appear in $A_n$.
\item $R_n$, the set of retired vertices, which consists of vertices that appear neither in $A_n$ nor $S_n$.
\end{itemize}
At the initial step $n=0$ of the algorithm, we choose a vertex $v$ uniformly at random and pair each of its $d_v^{(N)}$ half-edges to uniform half-edges of the graph. Denote by $v_1, \ldots,v_l$ the corresponding vertices. For each $1 \leq i \leq l$, we successively match the half-edges of $v_i$ to uniform half-edges of the graph and denote by $v_i^1, \ldots, v_i^{k_i}$ the corresponding vertices (without repeat). Let $\mathbf{m}_v = \left((v_1,(v_1^1, \ldots, v_1^{k_1})), \ldots, (v_l,(v_l^1, \ldots, v_l^{k_l})) \right)$ and set
\[
\begin{cases}
A_0 = ((v,\mathbf{m}_v)), \\
S_0= \mathrm{V}_N \setminus \{v, v_1, \ldots, v_l \}, \\
R_0=\emptyset.
\end{cases}
\]
Suppose that $A_n$, $S_n$ and $R_n$ are constructed. Three cases are possible:
\begin{enumerate}
\item If $A_n = \emptyset$, the algorithm has just finished exploring and building a connected component of $\mathscr{C}(\mathbf{d}^{(N)})$. In this case, we select $v_{n+1}$ uniformly at random inside $S_n$ and define $\mathbf{m}_{v_{n+1}}$ exactly as before, except that the matched vertices are in $S_n$. Denoting $v_{n+1}^1, \ldots, v_{n+1}^l$ the vertices of $S_n$ matched to $v_{n+1}$, we then set:
\[
\begin{cases}
A_{n+1} = (v_{n+1},\mathbf{m}_{v_{n+1}}), \\
S_{n+1} = S_n \setminus \{v_{n+1}, v_{n+1}^1, \ldots, v_{n+1}^l \}, \\
R_{n+1} = R_n.
\end{cases}
\]

\item If $A_n \neq \emptyset$ and if its last element $(v,\mathbf{m}_{v})$ is such that $\mathbf{m}_v = \emptyset$, the exploration backtracks and we set:
\[
\begin{cases}
A_{n+1} = A_n - (v, \mathbf{m}_{v}), \\
S_{n+1} = S_n \\
R_{n+1} = R_n \cup \{v\}.
\end{cases}
\]

\item If $A_n \neq \emptyset$ and if its last element $(v,\mathbf{m}_{v})$ is such that $\mathbf{m}_v \neq \emptyset $, we denote by $(v_{n+1}, (v_{n+1}^1, \ldots,\\ v_{n+1}^l))$ the first element of $\mathbf{m}_v$. In that case, the exploration goes to $v_{n+1}$ and we construct $\mathbf{m}_{v_{n+1}}$ as before using the $v_{n+1}^i$'s and the vertices of $S_n$ that are matched to them. We finally set:
\[
\begin{cases}
A_{n+1} = A_n + (v_{n+1},\mathbf{m}_{v_{n+1}}), \\
S_{n+1} = S_n \setminus \{ v_{n+1}^1, \ldots, v_{n+1}^l \}, \\
R_{n+1} = R_n.
\end{cases}
\]
\end{enumerate}

In words, this algorithm is an interpolation between depth-first and breadth-first explorations of the graph. More precisely, our procedure first constructs the $1$-neighborhood of the current vertex $v$, which consists in some vertices $v_1,\ldots,v_l$ and it then matches sequentially the half-edges of $v_1, \ldots, v_l$ that are not yet matched. In particular, this ensures that every ancestral line of the trees constructed by the algorithm is in fact an induced path in the graph, because the discovered vertices $v_i^j$, $1 \leq i \leq j$, $1 \leq j \leq k_i$, are distinct from $v_1, \ldots, v_l$.

Since each matching of half-edges is uniform during the construction, this algorithm constructs a random graph $\mathscr{C}(\mathbf{d}^{(N)})$. Furthermore, at each step $n$, the subgraph of $\mathscr{C}(\mathbf{d}^{(N)})$ induced by the vertices of $S_n$ is a configuration model. The sequence of vertices corresponding to the first component of the last element of $A_n$ provides a spanning tree of each connected component of the graph together with a contour process of each of these trees.

We will denote by $(X_n)_{0 \leq n \leq 2N}$ the concatenated contour processes of the successive covering trees constructed by the algorithm.
Our main result, Theorem \ref{theo:main}, will be an easy consequence of the following fluid limit for the process $(X_n)_{0 \leq n \leq 2N}$:

\begin{theorem} \label{th:profile}
Recall the definition of $\rho_{\bm \pi}$ and $\xi_{\bm \pi}$ given in equations \eqref{eq:rho2} and \eqref{eq:xi}, and the definition of the functions $\alpha(\rho)$, $g (\alpha , s)$ and $\hat g (\alpha , s)$ given in Theorem \ref{theo:main}.
Under assumptions {\bf (A1)}, {\bf (A2)}, {\bf (A3)} and {\bf (A4)}, the following limit holds in probability for the topology of uniform convergence: 
$$ \forall u \in [0,2], \quad \lim_{N\to \infty} \frac{X_{\lceil u N\rceil }}{N}=h(u),$$
where the function $h$ is continuous on $[0, 2]$, null on the interval $[2 \xi_{\bm \pi}, 2]$ and defined hereafter on the interval $[0,2\xi_{\bm \pi}]$.

The graph $(u,h(u))_{u \in [0, 2 \xi_{\bm \pi}]}$ can be divided into a first increasing part and a second decreasing part.
These parts are respectively parametrized for $\rho \in [0, \rho_{\bm \pi}]$ by :
\[
\begin{cases}
x^\uparrow(\rho) & := \displaystyle{\int_\rho^{\rho_{\bm \pi}}   \frac{(2-r) \, \alpha'(r)}{\partial_s \hat g (\alpha(r),1)} \mathrm{d}r},\\
y^\uparrow(\rho) & := \displaystyle{\int_\rho^{\rho_{\bm \pi}}  \frac{r \, \alpha'(r)}{\partial_s \hat g (\alpha(r),1)} \mathrm{d}r},
\end{cases}
\]
for the increasing part and
\[
\begin{cases}
x^\downarrow(\rho) := x^\uparrow (\rho) + 2 \, \left(1- \alpha(\rho) \right) \bigg( 1 -  g \big(\alpha(\rho),1-\rho \big) \bigg),\\
y^\downarrow(\rho) := y^\uparrow(\rho),
\end{cases}
\]
for the decreasing part.
\end{theorem}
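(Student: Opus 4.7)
The strategy is to cast the algorithm of Section \ref{sec:algorithm} as a Markov chain on a low-dimensional state and to apply the differential equation method in the form developed in the companion paper \cite{EFMN}. The key structural fact is that at each step $n$ the subgraph induced by the sleeping vertices together with their remaining half-edges is itself a configuration model on the conditional residual degree sequence. Consequently, conditionally on the current state, the next half-edges chosen by the algorithm are drawn uniformly from the sleeping half-edge pool, which makes the empirical degree profile and the process $X_n$ jointly a well-controlled Markov chain whose fluid limit one can hope to describe by an explicit ODE system.

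I would first introduce the coarse-grained variables $\alpha_n$ (fraction of half-edges already consumed), $\rho_n$ (fraction of vertices removed from the sleeping pool so far), and $X_n/N$. Standard thinning estimates, as in \cite{EFMN}, imply that the residual degree profile of the sleeping pool is, at leading order, a deterministic function of $\alpha_n$, obtained by a size-biasing with parameter $f_{\bm\pi}^{-1}(1-\alpha_n)$; this is what makes $f_{\bm\pi}^{-1}(1-\alpha)$ appear inside $g(\alpha,s)$. Within this approximation, the number of children and of pre-matched grand-children revealed at a descent step has $g(\alpha,\cdot)$ as its joint generating function, and the size-biased version obtained by following one outgoing half-edge is encoded by $\hat g(\alpha,\cdot)$. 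The implicit relation $1-\rho=\hat g(\alpha(\rho),1-\rho)$ is then a fixed-point equation tying $\alpha$ to $\rho$, analogous to the extinction equation of an associated Galton--Watson tree exploring a single descent branch.

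Having set this up, I would write the expected one-step increments of $(n,\rho_n,\alpha_n,X_n)$ in each of the three cases of the algorithm. The descent steps (Case 3) drive the increasing part of $h$: each such step contributes $+1$ to $X_n$ while consuming a macroscopic quantity of sleeping vertices and half-edges expressible through $\partial_s g(\alpha,1-\rho)$ and $\partial_s \hat g(\alpha,1)$. Eliminating time by dividing increments yields an ODE system whose integration gives exactly $(x^\uparrow(\rho),y^\uparrow(\rho))$. The backtracking steps (Case 2) drive the decreasing part: the height decreases by $1$ per step with no change in the half-edge pool, and the horizontal length of the backtracking equals twice the macroscopic size $(1-\alpha)(1-g(\alpha,1-\rho))$ of the retired subtree, which accounts for the formula $x^\downarrow-x^\uparrow=2(1-\alpha(\rho))(1-g(\alpha(\rho),1-\rho))$. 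Fresh starts (Case 1) occur finitely often on the macroscopic scale: the first such excursion corresponds to the giant component, of horizontal length $2\xi_{\bm\pi}N+o(N)$, while the remaining small components contribute only $o(N)$ to the height on $[2\xi_{\bm\pi},2]$, which is why $h$ vanishes there.

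Concentration of the rescaled processes around the deterministic ODE solution is then obtained by a standard Doob/Azuma argument on the centered increments, using (A4) to bound maximal jumps by $N^{1/\gamma}$ and (A3) to control quadratic variations, exactly as in \cite{EFMN}. The main obstacle I foresee is that, unlike the vanilla DFS analysed in \cite{EFMN}, our algorithm uncovers at each descent step a random cluster of vertices (the current vertex together with its pre-matched neighbours), so the elementary Markov step is per-cluster rather than per-half-edge. Fitting this batched structure into the Wormald framework requires some care, in particular to check that fluctuations of the cluster size remain negligible at the fluid-limit scale; modulo this bookkeeping, the argument is a careful adaptation of the analysis in \cite{EFMN}.
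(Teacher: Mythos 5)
Your proposal correctly identifies several of the structural ingredients — the graph induced by sleeping vertices remains a configuration model, Wormald's differential-equation method is the engine, and the implicit relation $1-\rho = \hat g(\alpha,1-\rho)$ is a Galton--Watson extinction-type fixed point. However, there is a genuine gap at the heart of the argument, which you yourself partially flag but do not resolve: the process $(\alpha_n,\rho_n,X_n/N)$ is \emph{not} Markov at the per-step scale. The conditional drift of $X_n$ depends on which of the three cases the algorithm is in, and the probability of each case depends on the entire contents of the active list $A_n$ — in particular on how many unexplored children remain at each level of the spine, and whether the current vertex sits on the spine or inside a small side excursion. None of this is recoverable from the coarse triple $(\alpha_n,\rho_n,X_n)$. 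Consequently, "writing the expected one-step increments of $(n,\rho_n,\alpha_n,X_n)$ in each of the three cases" cannot close into an ODE system: the case probabilities are not functions of the state. The paper resolves this by introducing the \emph{ladder times} $T_k$ (the first time $X$ reaches height $k$ and stays above it for $N^\delta$ steps) and taking the full degree profile $(N_i(k))_{i\geq 0}$ of $\mathscr S_k$ as the state variable. At ladder times the exploration has a renewal structure (the stretch between $T_k$ and $T_{k+1}$ explores complete small components of $\mathscr S_k$), which is precisely what makes the Wormald increments well defined and computable; this is the content of Theorem \ref{theo:fluidlimits} and it is the step your proposal is missing.

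Two further points are imprecise in a way that matters. First, you describe the increasing part of $h$ as driven by Case 3 (descent) steps and the decreasing part as driven by Case 2 (backtrack) steps. In fact both phases involve heavy alternation of ascent and descent; what is monotone is the ladder height $k$, and the decreasing arc is not obtained from per-step backtrack increments but from the observation that the time to return to ladder height $k$ equals the time to exhaust the giant component of $\mathscr S_{T_k}$, yielding the horizontal shift by $2(1-\alpha(\rho))(1-g(\alpha(\rho),1-\rho))$. Second, the appearance of $g(\alpha,s)$ is not "standard thinning": the residual degree distribution $g(\alpha,s) = \frac{1}{1-\alpha} f_{\bm\pi}\bigl(f^{-1}_{\bm\pi}(1-\alpha) - (1-s)\,f'_{\bm\pi}(f^{-1}_{\bm\pi}(1-\alpha))/f'_{\bm\pi}(1)\bigr)$ has an additive (not multiplicative) shift in the argument, reflecting the batch-deletion of all remaining half-edges of explored vertices, and it is obtained by solving a PDE for the fluid limit of the profile (Theorems \ref{theo:fluidlimits} and \ref{theo:law2}), not by a soft thinning estimate.
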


Theorem \ref{theo:main} is obtained by computing the maximal value of $h$, which is given by $y^\uparrow(0)$. The proof of Theorem \ref{th:profile} is an adaptation of the article \cite{EFMN} by the authors and is the object of the next section. It relies on Wormald's differential equations method \cite{MR1384372} via the study of ladder times of the exploration and the law of the graph induced by the sleeping vertices at these times.

\section{Analysis of the algorithm} \label{sec:analysis}

The overall strategy follows the guidelines of the previous work \cite{EFMN}. In particular, we start by identifying a good event that makes possible a decomposition of the exploration at ladder times. For every $n \in \{0 , \ldots, 2N\}$, let $D_{n}^{(N)}$ be the degree of a uniform vertex in the graph induced by $S_{n}$.
For every $\varepsilon >0$ we define
\begin{equation*}
n_\epsilon = n_{\varepsilon}^{(N)} = \sup \left\{ n \in \llbracket 0, 2N \rrbracket: \, \forall m \in \llbracket 0,n \rrbracket, \, \frac{\E[D_{n}^{(N)}( D_{n}^{(N)}-1)]}{\E[D_{n}^{(N)}]} > 1 + \varepsilon \right\}.
\end{equation*}
For $n < n_\varepsilon$, the subgraphs induced by $S_n$ are all supercritical. For $0 < \delta < 1/2$, let $\mathbf G_\varepsilon = \mathbf{G}_{\varepsilon}^{(N)}(\delta)$ be the event that, for all $n < n_\varepsilon$,
\begin{itemize}
\item there is at least one connected component with size greater than $N^{1-\delta}$ in the graph induced by $S_n$;
\item there is no connected component of size between $N^\delta$ and $N^{1 - \delta}$ in the graph induced by $S_n$.
\end{itemize}
Under our assumptions {\bf (A1)}, {\bf (A2)}, {\bf (A3)} and {\bf (A4)}, we have for every $\lambda >0$,
\begin{equation}
\label{lemma: estimate good event}
\mathbb P \big( \mathbf{G}_{\varepsilon} \big) = 1 -  \mathcal O (N^{-\lambda}).
\end{equation}

\subsection{Ladder times} \label{sec:pseudo2}
Fix $\delta \in (0,1) $. Let $T_0 = 0$ and define, for $k \in \{0 , \ldots , K\}$,
\begin{equation*}
T_{k+1} := \min \left\{ i > T_k, \, X_i = k+1 \, \, \text{and} \, \, \forall i \leq j \leq i + N^\delta, \, X_j \geq k+1 \right\},
\end{equation*}
where $K$ is the last index for which this definition makes sense (i.e. the set for which the min is taken is not empty).  Of course, this sequence of times will only be useful to analyse our algorithm when $K$ is of macroscopic order, which is indeed the case on the event $\mathbf{G}_{\varepsilon}$. Indeed, as long as $T_k < n_\varepsilon$, we can define $T_{k+1}$ on the event $\mathbf G_\varepsilon$. Therefore we set
\begin{equation}
K_\varepsilon = \sup \{ k \in \llbracket 0 , K \rrbracket : T_k < n_\varepsilon \}.
\end{equation}
Thanks to \eqref{lemma: estimate good event}, we have $K_\varepsilon < K$ with probability $1 -  \mathcal O (N^{-\lambda})$.

\begin{figure}[!h]
\begin{center}
\includegraphics[scale=0.8]{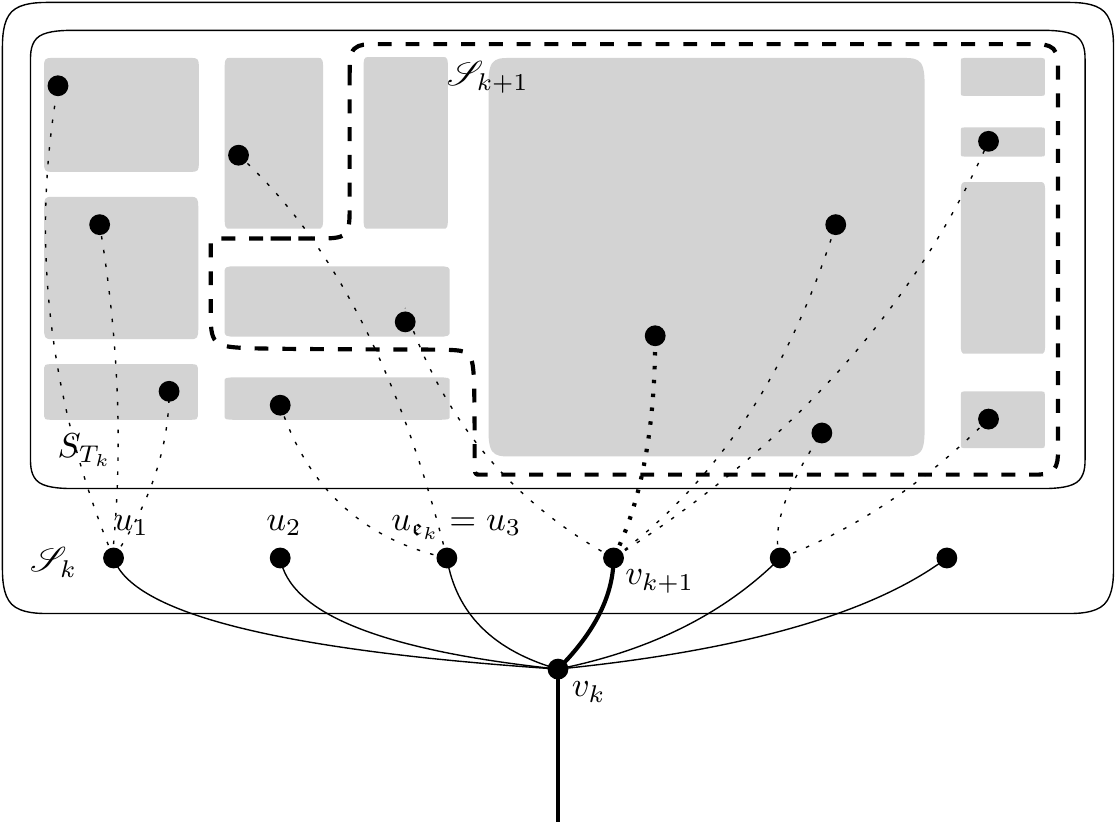}
\caption{\label{fig:ladder2} Structure of the remaining graph at a ladder time. The first half edges of $\mathbf{v}_k$ are numbered according to their matching order during the construction.}
\end{center}
\end{figure}

For all $k \in \{0 , \ldots , K\}$, let $\mathbf{v}_k$ be the vertex corresponding to the first component of the last element of $A_{T_k}$. There exists some $l\geq1$ such that the sequence $\mathbf m_{\mathbf{v}_k}$ can be written
\begin{equation} \label{eq:notationsrenewal}
\mathbf m_{\mathbf{v}_k} = \left((u_1,(u_1^1, \ldots, u_1^{k_1})), \ldots, (u_l,(u_l^1, \ldots, u_l^{k_l})) \right),   
\end{equation}
and we will denote by $\geom_k$ the index in $\{1, \ldots, l\}$ such that $u_{\geom_k +1} = \mathbf{v}_{k+1}$. We consider $\mathscr{S}_k$ the graph induced by the set of vertices $S_{T_k - 1}$. See Figure \ref{fig:ladder2} for an illustration of these definitions. As it turns out, the analysis of the structural changes of $\mathscr{S}_k$ will be crucial for our purpose. For instance, the difference between $T_{k+1} - T_k$ is equal to the time spent exploring the connected components associated to the vertices $u_1, \ldots, u_{\geom_k}$ inside the graph $\mathscr{S}_{k}$.

\subsection{Analysis of the graphs $\mathscr S_k$}

For all $k< K$, let $N_i(k)$ be the number of vertices of degree $i$ in $\mathscr{S}_k \cup \{ \mathbf{v}_k \}$ which are different from $\mathbf{v}_k$. Then, by definition of the exploration, the graph $\mathscr{S}_k$ has the law of a configuration model with vertex degrees given by the sequence $(\widetilde{N}_i(k))_{i \geq 0}$. Using the notation of \eqref{eq:notationsrenewal}, the contribution to $\widetilde{N}_i(k)$ of the edges belonging to $v_k$ is given by
\[
\sum_{p = 1}^{l} 
\left( - \mathbf{1}_{ \deg_{\mathscr{S}_k}(u_p) =i } + \mathbf{1}_{ \deg_{\mathscr{S}_k}(u_p)= i-1 } \right),
\]
and we write
\[
\widetilde{N}_i(k) = N_i(k) + \sum_{p = 1}^{l} 
\left( - \mathbf{1}_{ \deg_{\mathscr{S}_k}(u_p) =i } + \mathbf{1}_{ \deg_{\mathscr{S}_k}(u_p)= i-1 } \right).
\]
Assumption ({\bf A4}) ensures that $\widetilde{N}_i(k) - N_i(k)$ is of order $o(N)$ uniformly along the whole exploration with high probability. Henceforth, we focus our analysis on the $N_i(k)$'s whose analysis is more clear.
This evolution is ideed given by:

\begin{align}
N_i(k  +1) -  N_i (k) = 
&- V_i({\mathscr{S}_k} \setminus \mathscr{S}_{k+1}) \label{eq1} \\
& +  \sum_{p = \geom_k + 2}^{l}  \sum_{q=1}^{k_p} 
\left( - \mathbf{1}_{ \deg_{\mathscr{S}_k}(u_p^q) =i } + \mathbf{1}_{ \deg_{\mathscr{S}_k}(u_p^q)= i+1 } \right)  \label{eq4},                
\end{align}
where $V_i(S)$ stands for the number of vertices with degree $i$ in $S$ and $l$ is defined in Equation \eqref{eq:notationsrenewal}. Indeed, the first contribution corresponds to the complete removal of vertices belonging to $\mathscr {S}_k$ but not $\mathscr{S}_{k+1}$. The second contribution corresponds to the removal of edges connecting the vertices $u_{\geom_k +2}, \ldots, u_l$ to $\mathscr{S}_{k+1}$.

The crucial step of the proof is the asymptotic analysis of the variables $T_k$ and $N_i(k)$ for large $N$. This is the object of the forthcoming Theorem \ref{theo:fluidlimits}. In order to state it, we first need to introduce some notation.

Let $(z_i)_{i \geq 0} \in \mathbb{R}^{ \mathbb{Z}_+ }$ be such that $\sum_{ i \geq 0} z_i  \leq 1$ and $\sum_{k \geq 0} i z_i < \infty$. 
for any $i \geq 0 $ let $\hat{z}_i = (i+1)z_i / \sum_j j z_j$ and define:
\begin{equation}
\begin{cases} g_{(z_i)_{i \geq 0}}(s) &=  \sum\limits_{i \geq 0} \frac{z_i}{\sum_{l\geq 0} z_l} s^i \\
\hat{g}_{ (z_i)_{i \geq 0}}(s) &= \sum\limits_{i \geq 0} \hat{z}_i s^i = \frac{g'_{(z_i)_{i \geq 0} }(s)}{g'_{(z_i)_{i \geq 0} } (1)}
\end{cases}
\label{eq: DefnGenSer2}
\end{equation}
respectively the generating series associated to $(z_k)_{k \geq 0}$ and its sized biased version.
Let also $\rho_{(z_i)_{i\geq 0}}$ be the largest solution in $[0,1]$ of 
\begin{equation} \label{eq:rhog}
1-s = \hat{g}_{(z_i)_{i \geq 0} } (1-s).
\end{equation}
\begin{remark}
Since $\hat{g}$ is the generating function of a probability distribution on the integers, it is convex on $[0,1]$. Therefore, Equation \eqref{eq:rhog} has a positive solution in $(0,1]$ if and only if $\hat{g}'(1) > 1$, which is equivalent to $\frac{\sum_{l \geq 1} (l-1)l z_l}{\sum_{l \geq 1} l z_l} > 1$.
\end{remark}

We also define the following functions:

\begin{align}
f(z_0,z_1, \ldots) &=  \frac{2- \rho_{(z_i)_{i \geq 0}} }{\rho_{(z_i)_{i \geq 0}}} \label{eq:deff2} \\
f_i(z_0, z_1, \ldots) &=  - \frac{1}{\rho_{(z_j)_{j \geq 0}}} \frac{i z_i}{\sum_{j \geq 0} k z_j} +  \frac{1}{\rho_{(z_j)_{j \geq 0}}} \left( \frac{\sum_{j \geq 0} (j-1)j z_j}{ \sum_{n \geq 0} j z_j } -1 \right) \times \notag \\
& \hspace{2cm} \left( - \frac{i z_{i}}{\sum_{j \geq 0} j z_j}
 +  \frac{\sum_{j \geq 0} (j-1)j z_j}{ \sum_{n \geq 0} j z_j }  \left( \frac{(i+1) z_{i+1}}{\sum_{j \geq 0} j z_j} - \frac{i z_{i}}{\sum_{j \geq 0} j z_j} \right) \right) \label{eq:deff_i2}.
\end{align}

The asymptotic behaviour of the variables $T_k$ and $N_i(k)$ will be driven by the solution of an infinite system of differential equations whose uniqueness and existence is provided by the following lemma.

\begin{lemme}\label{lemma: EqDiff2}
Let $\bm \pi = (\bm \pi_i)_{i \geq 0} \in [0,1]^\mathbb{N}$ be a probability measure which satisfies the supercriticality assumption ({\bf A2}). Then, the following system of differential equations has a unique solution which is well defined on $[0,t_{\max})$ for some $t_{\max}>0$:
\begin{equation}\tag{S}\label{eq: EqDiff2'}
\left\{ 
\begin{array}{lcl}
\frac{\mathrm{d}z_i}{\mathrm{d}t} & = & f_i(z_0, z_1, \ldots ); \\
z_i(0) & = & \bm \pi_i.
\end{array}
\right.
\end{equation}
\end{lemme}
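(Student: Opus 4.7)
The plan is to recast the infinite system \eqref{eq: EqDiff2'} as an ODE in a well-chosen Banach space and apply the standard Cauchy-Lipschitz (Picard-Lindelöf) theorem. A convenient setting is the weighted $\ell^1$-space
\[
\mathcal{B} = \Big\{ z = (z_i)_{i \geq 0} \in \mathbb{R}^{\mathbb{Z}_+} : \|z\|_{\mathcal{B}} := \sum_{i \geq 0} (1+i^\alpha)|z_i| < \infty \Big\}
\]
for a weight $\alpha$ large enough that both $\bm\pi \in \mathcal{B}$ (thanks to the finite second moment from ({\bf A1})) and the vector field $F(z) = (f_i(z))_{i \geq 0}$ leaves $\mathcal{B}$ stable. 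The strategy is then to show that on a sufficiently small open neighborhood $\mathcal{U}$ of $\bm\pi$ in $\mathcal{B}$, $F$ is locally Lipschitz, so that the classical Banach-valued version of Picard-Lindelöf furnishes a unique solution on some maximal interval $[0, t_{\max})$, with $t_{\max}$ defined as the first time the trajectory would exit $\mathcal{U}$.

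The first step is to observe that supercriticality from ({\bf A2}) is an open condition: at $\bm\pi$, the mean $\sum_j j \bm\pi_j > 0$ and the ratio $\sum_j (j-1)j \bm\pi_j / \sum_j j \bm\pi_j > 1$ are strict inequalities, and the underlying functionals are continuous on $\mathcal{B}$. Shrinking $\mathcal{U}$ preserves both inequalities with quantitative gaps. All the rational expressions in $(z_j)$ appearing in \eqref{eq:deff2}--\eqref{eq:deff_i2} are therefore smooth on $\mathcal{U}$; combined with the explicit structure of $f_i$---each summand involves only $i z_i$ or $(i+1)z_{i+1}$, divided by safely nonzero global quantities---this leads, after multiplication by $(1+i^\alpha)$ and summation, to an estimate of the form $\|F(z) - F(\tilde z)\|_{\mathcal{B}} \le C_{\mathcal{U}} \|z - \tilde z\|_{\mathcal{B}}$, up to the contribution of $\rho_{(z_i)}$.

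The main obstacle is the implicitly defined $\rho_{(z_i)}$. To handle it I would set $u = 1-s$ and identify $\rho_{(z_i)} = 1 - u^*(z)$, where $u^*(z)$ is the smallest zero in $[0,1)$ of
\[
\Phi(u, z) := u - \hat{g}_{(z_i)}(u).
\]
Strict convexity of $\hat{g}_{\bm\pi}$ together with ({\bf A2}) yields the transversality
\[
\partial_u \Phi(u^*(\bm\pi), \bm\pi) = 1 - \hat{g}'_{\bm\pi}(u^*(\bm\pi)) > 0,
\]
while the coefficients of $\Phi$ are continuous linear functionals of $z \in \mathcal{B}$. The Banach-space implicit function theorem then provides a Lipschitz map $z \mapsto u^*(z)$ on a neighborhood of $\bm\pi$, with $u^*(z)$ bounded away from $0$, and hence a Lipschitz map $z \mapsto \rho_{(z_i)}$. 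Inserting this into the previous estimate completes the local Lipschitz verification of $F$ on $\mathcal{U}$, and Picard-Lindelöf concludes the proof of existence and uniqueness on $[0, t_{\max})$.
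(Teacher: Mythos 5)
Your approach differs fundamentally from the paper's: you attempt a direct Picard--Lindel\"of argument in a weighted $\ell^1$ space, whereas the paper passes to the generating series $F(t,s)=\sum_i z_i(t)s^i$, shows it solves a first-order PDE, performs a time change, and then invokes the well-posedness result already established in Section~6.2 of \cite{EFMN} for that PDE.

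Unfortunately your proposal has a genuine gap at the central step, and it is precisely the gap the paper's change of variables is designed to circumvent. Inspecting \eqref{eq:deff_i2}, the right-hand side $f_i(z)$ is, up to $z$-dependent but $i$-independent prefactors, a linear combination of $i z_i/\sum_j j z_j$ and $(i+1)z_{i+1}/\sum_j j z_j$. Writing $\mu_1=\sum_j j z_j$, one finds $|f_i(z)| \asymp i|z_i| + (i+1)|z_{i+1}|$, so that
\[
\|F(z)\|_{\mathcal B} = \sum_i (1+i^\alpha)|f_i(z)| \;\asymp\; \sum_i (1+i^{\alpha+1})|z_i|,
\]
which is the $\mathcal B$-norm \emph{with weight $\alpha+1$}, not $\alpha$. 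The vector field loses one moment: it maps $\mathcal B_\alpha$ into $\mathcal B_{\alpha-1}$ but does not map $\mathcal B_\alpha$ to itself, and a fortiori it is not locally Lipschitz on $\mathcal B_\alpha$. (A partial summation does recover one power on the \emph{difference} $iz_i-(i+1)z_{i+1}$, but rewriting $f_i$ in terms of that difference still leaves an uncancelled term of order $(i+1)z_{i+1}$, so the loss is real.) Worse, the hypothesis ({\bf A1}) only gives a finite second moment for $\bm\pi$, so you can take at most $\alpha=2$ to have $\bm\pi\in\mathcal B$, and then even $F(\bm\pi)$ itself need not lie in $\mathcal B$ because it involves the third moment. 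Your claim that one may choose ``a weight $\alpha$ large enough'' is therefore going in the wrong direction: larger $\alpha$ makes $\bm\pi\notin\mathcal B$, while smaller $\alpha$ does not cure the shift. The implicit-function-theorem treatment of $\rho_{(z_i)}$ is fine, but it is not the real difficulty. To repair the argument you would need either to impose that $\bm\pi$ has all moments finite with controlled growth, or, as the paper does, recognise that the system encodes a transport-type PDE for the generating function, for which well-posedness is established by other means.
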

\begin{proof}
Let 
\begin{equation} \label{eq:F}
F(t,s):= \sum_{i \geq 0} z_i(t) s^i
\end{equation}
be the generating series associated to the $z_i$'s, which is well defined for all $s \in [0,1]$ and all $t \in [0,t_{\max}')$, where $[0,t_{\max}')$ is the maximal interval where the functions $z_i$ are well defined. Then, it is easy to check that $F$ satisfies the following partial differential equation:
\[ \frac{\partial F}{\partial t} (t,s) = \frac{1}{\rho_{ (z_j(t))_{j \geq 0} }} \frac{\frac{\partial^2 F}{\partial s^2} (t,1)}{\frac{\partial F}{\partial s}(t,1)} \frac{ \frac{\partial F}{\partial s} (t,s) }{\frac{\partial F}{\partial s} (t,1)} \left( (1-s) \frac{\frac{\partial^2 F}{\partial s^2} (t,1)}{\frac{\partial F}{\partial s} (t,1)} -1 \right). \]
Using the appropriate time change, we end up with a new generating series $f(t,s) = \sum_{j \geq 0} \zeta_j(t)s^i$ satisfying
\begin{equation}\label{eq:EqDiff}
\frac{\partial f}{\partial t} (t,s) = \frac{ \frac{\partial f}{\partial s} (t,s) }{\frac{\partial f}{\partial s} (t,1)} \left( (1-s) \frac{\frac{\partial^2 f}{\partial s^2} (t,1)}{\frac{\partial f}{\partial s} (t,1)} -1 \right).
\end{equation}
Notice that, up to a time change, this corresponds to the differential equation that already appeared in \cite{EFMN} -- see the beginning of the proof of Proposition 1. It was proved in Section 6.2 of \cite{EFMN} that it has a unique solution under our assumptions.
\end{proof}

\bigskip

We are now ready to state the main result of this section.
\begin{theorem}\label{theo:fluidlimits}
Fix $\varepsilon > 0$. With high probability, for all $k \leq K_\varepsilon$ : 
\begin{align*}
T_k &= N z\left( \frac{k}{N}  \right) + o(N) \\
N_i(k) &= N z_i \left( \frac{k}{N}   \right) + o(N),
\end{align*}
where $(z_0, z_1, \ldots)$ is the unique solution of \eqref{eq: EqDiff2'} and $z$ is the unique solution of
$\frac{\mathrm{d}z}{\mathrm{d}t} = f(z_0,z_1, \ldots)$ with initial condition given by $z(0) = 0$.

In addition, if $w(k)$ denotes the number of vertices that are not in the graph $\mathscr{S}_k$, then 
\[ w(k) = N \tilde{z}\left(\frac k N \right) + o(N), \]
where $\tilde{z}$ satisfies $\tilde{z}'(t) = \hat{g}_{(z_j(t))_{j \geq0}}'(1) / \rho_{(z_j(t))_{j \geq 0}}$ and $\tilde{z}(0)=1$.
\end{theorem}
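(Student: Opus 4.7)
The plan is to apply Wormald's differential equations method~\cite{MR1384372} to the discrete-time process $(T_k, N_0(k), N_1(k), \ldots, w(k))_{k \leq K_\varepsilon}$ indexed by the ladder times. Throughout we work on the good event $\mathbf{G}_\varepsilon$, which has probability $1-O(N^{-\lambda})$ by~\eqref{lemma: estimate good event}. The key structural input is that, conditional on the history $\mathscr{F}_{T_k}$ up to time $T_k$, the graph $\mathscr{S}_k$ is a configuration model with degree sequence $(\widetilde{N}_i(k))_{i\geq 0}$; assumption ({\bf A4}) ensures $|\widetilde{N}_i(k) - N_i(k)| = o(N)$ uniformly in $k$ and $i$, so we may replace $\widetilde{N}_i(k)$ by $N_i(k)$ up to vanishing error. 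Write $\rho_k := \rho_{(N_i(k)/N)}$, which is bounded away from $0$ for all $k \leq K_\varepsilon$ by the very definition of $n_\varepsilon$.

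The first main task is to compute the conditional drift of $T_{k+1}-T_k$. Between two consecutive ladder times, the algorithm scans the children $u_1,\ldots,u_l$ of $\mathbf{v}_k$ and carries out a depth-first exploration of the component hanging from each $u_p$ until it reaches the first child $u_{\geom_k+1}=\mathbf{v}_{k+1}$ whose component in the residual graph is macroscopic. On $\mathbf{G}_\varepsilon$ each such $u_p$ is accessed via a uniform half-edge of $\mathscr{S}_k$, and classical theory of supercritical configuration models shows that its component is either the giant (probability $\rho_k + o(1)$) or of size at most $N^\delta$ (by the size-gap property enforced by $\mathbf{G}_\varepsilon$). Hence $\geom_k$ is asymptotically geometric with parameter $\rho_k$, and the conditional law of each small component is that of a subcritical dual configuration model, whose vertex-type generating series is $g_{(N_i(k)/N)}$ evaluated at $1-\rho_k$. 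Combining these ingredients with the contour identity (a DFS of a tree of size $n$ consumes $2n$ steps), a direct computation yields
\[
\E\bigl[T_{k+1}-T_k \,\big|\, \mathscr{F}_{T_k}\bigr] \;=\; \frac{2-\rho_k}{\rho_k}+o(1) \;=\; f\!\left(\tfrac{N_0(k)}{N},\tfrac{N_1(k)}{N},\ldots\right)+o(1),
\]
matching the ODE $\mathrm{d}z/\mathrm{d}t = f(z_0,z_1,\ldots)$ of the statement.

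Second, I would carry out the analogous drift computation for $N_i(k)$ via the decomposition~\eqref{eq1}--\eqref{eq4}. The first contribution~\eqref{eq1} removes entire vertices of degree $i$ belonging to small components explored during the step; its conditional expectation is read off from $g_{(N_i(k)/N)}$ and $\hat g_{(N_i(k)/N)}$ at $1-\rho_k$, producing the first summand of~\eqref{eq:deff_i2}. The second contribution~\eqref{eq4} records the degree-one drop of each vertex $u_p^q$ matched to the unexplored children $u_{\geom_k+2},\ldots,u_l$; these are seen through uniform half-edges, so their degrees are size-biased according to $\hat g_{(N_i(k)/N)}$. Plugging in $\E[l-\geom_k-1 \mid \mathscr{F}_{T_k}]$, which is computed from the same generating series, reproduces the second summand of~\eqref{eq:deff_i2}. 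For $w(k)$, one simply observes that $w(k+1)-w(k)$ is the total number of fresh vertices removed during the step, whose conditional expectation is $\hat g'_{(N_i(k)/N)}(1)/\rho_k + o(1)$, matching the announced ODE for $\tilde z$.

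With the drifts identified, Wormald's theorem applied to a suitably truncated version of the process gives the uniform convergence claimed on $[0, K_\varepsilon / N]$, with existence and uniqueness of the limiting ODE provided by Lemma~\ref{lemma: EqDiff2}. The main technical obstacle is that the one-step increments $T_{k+1}-T_k$ and $\sum_i |N_i(k+1)-N_i(k)|$ are not uniformly bounded---on $\mathbf{G}_\varepsilon$ they are only $O(N^{1-\delta})$---so one must truncate large jumps and show the truncation error is negligible, using the geometric-type tail of $\geom_k$ together with standard moment bounds on small-component sizes in the subcritical dual, exactly as implemented in Section~6 of~\cite{EFMN}. A secondary point is to control the discrepancy $\widetilde N_i(k)-N_i(k)$ uniformly in $k$, which is the role of assumption~({\bf A4}), and to ensure that $\rho_k$ stays bounded away from $0$, which is guaranteed as long as $k\le K_\varepsilon$.
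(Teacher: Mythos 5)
Your proposal follows the same route as the paper's proof: Wormald's differential equations method on the ladder-time chain, with the conditional drifts computed via the decomposition \eqref{eq1}--\eqref{eq4} and jump-size control coming from the good event $\mathbf G_\varepsilon$ together with assumption {\bf (A4)}. However, your accounting of the contribution \eqref{eq1} is incomplete in a way that would not reproduce $f_i$. Between $T_k$ and $T_{k+1}$, the set $\mathscr S_k \setminus \mathscr S_{k+1}$ consists not only of the small components $W^{(1)},\ldots,W^{(\geom_k)}$, but also of the next spine vertex $\mathbf v_{k+1}$ and the unexplored siblings $u_{\geom_k+2},\ldots,u_l$ appearing in $\mathbf m_{\mathbf v_k}$. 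The small-component removal alone contributes $\hat p_{i-1}(1-\rho_k)^{i-1}/\rho_k$, \emph{not} $\hat p_{i-1}/\rho_k$; it is only after adding the $\mathbf v_{k+1}$ term $\hat p_{i-1}\bigl(1-(1-\rho_k)^{i-1}\bigr)/\rho_k$ that the $(1-\rho_k)^{i-1}$ factors cancel and the first summand of $f_i$ emerges. Likewise, the removal of $u_{\geom_k+2},\ldots,u_l$ themselves supplies a term $-\hat p_{i-1}\bigl(\hat g_k'(1)-1\bigr)/\rho_k$ that must be combined with your \eqref{eq4} contribution; omitting it, the second summand of $f_i$ does not come out correctly. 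A secondary remark on the jump bound: on $\mathbf G_\varepsilon$ with $\delta$ small enough, each small component has size at most $N^\delta$ and $\geom_k \le \deg_{\mathscr S_k}(\mathbf v_k) \le N^{1/\gamma}$ by {\bf (A4)}, so the one-step increments are $O\bigl(N^{1/\gamma+\delta}\bigr)=o(N^{1/2})$; the Wormald-type corollary of \cite{EFMN} then applies directly, and your $O(N^{1-\delta})$ bound is too crude to invoke it without a truncation that is in fact unnecessary.
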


\begin{proof}
Our main tool is an adaptation of Wormald's differential equations method, which is the content of Corollary 2 of \cite{EFMN}. To apply this result we need to check the following two points:
\begin{enumerate}
\item There exists $0 < \beta < 1/2$ such that with high probability for all $k \leq K_\varepsilon$, 
\[ |T_{k+1} - T_k | \leq N^\beta \text{   and for all   } i \geq 0,\, |N_i({k+1}) - N_i(k) | \leq N^\beta. \]
\item We denote by $(\mathcal{F}_k )_{k\geq 0}$ the canonical filtration associated to the sequence $\left( (N_i(k))_{i \geq 0} \right)_{k \geq 0} $. There exists $\lambda> 0$ such that for every $k$ and $n$, 
\begin{align} 
\E[T_{k+1} - T_k \, | \, \mathcal{F}_k] &= f \left( \frac{N_0(k)}{N} , \frac{N_1(k)}{N} , \ldots  \right) + O \left( N^{-\lambda} \right), \label{eq:Wormald1} \\
 \E[ N_i({k+1}) - N_i(k) \, | \, \mathcal{F}_k] &= f_i \left( \frac{N_0(k)}{N} , \frac{N_1(k)}{N} , \ldots  \right) + O \left( N^{-\lambda} \right) \label{eq:Wormald2}.
\end{align}
\end{enumerate}

The first point is a direct consequence of the definition of the times $T_k$'s and of Equation \eqref{lemma: estimate good event} by chosing $\delta$ small enough. 

We now turn to the second point. Since the computations are very similar to those made during the proof of Theorem 3 of \cite{EFMN}, we will only sketch them and point out the corresponding details in \cite{EFMN}. For all $k \geq 0$, let
\begin{equation} \label{eq:gk}
\begin{array}{ccccc} 
p_i = p_i(k) = \frac{N_i(k)}{\sum_{ j \geq 0} N_j(k)} & \quad & ; & \quad &  g_k = g_{(p_j)_{j\geq 0}} \\
\phantom{b} \hat{p}_i = \hat{p}_i(k) = \frac{(i+1)p_{i+1}(k)}{\sum_{j \geq 0}jp_j(k)} & \quad & ; & \quad & \phantom{l} \hat{g}_k = \hat{g}_{(p_j)_{j\geq 0}}=g_{(\hat{p}_j)_{j\geq 0}} 
\end{array},
\end{equation}
and let $\rho_k = \rho_{(p_j(k))_{j \geq 0}}$ be the largest solution in $[0,1]$ of $1-s = \hat{g}_k(1-s)$.

Recall the notation of \eqref{eq:notationsrenewal}. With high probability, the first $\geom_k$ neighbors of $\mathbf{v}_k$ belong to distinct connected components of $\mathscr{S}_k$. Denoting $W^{(1)}, \ldots, W^{(k)}$ these connected components, we deduce that
\begin{align*} 
\E[T_{k+1} - T_k \, | \, \mathcal{F}_k] 
&= 1 + 2 \mathbb{E}\left[ \sum\limits_{j=1}^k | W^{(j)} | \, \big| \, \mathcal{F}_k  \right] \\
&= \frac{2 - \rho_k}{\rho_k} + \mathcal{O}(N^{-\lambda}),
\end{align*}
where the last equality is the content of Equation (12) in \cite{EFMN}. This proves \eqref{eq:Wormald1}.

We now fix $i \geq 0$, $k \geq 0$, and prove \eqref{eq:Wormald2} by examining separately the contributions \eqref{eq1} and \eqref{eq4}. The contribution \eqref{eq1} corresponds to the removal of vertices of degree $i$ inside the small components $W^{(j)}$'s, to the removal of $\mathbf{v}_{k+1}$, and to the removal of $u_{\geom_k +2}, \ldots, u_l$. It is given by
\begin{align} 
\mathbb{E}&\left[  V_i \left(  \mathscr S_{k} \setminus \mathscr{S}_{k+1} \right) \, | \, \mathcal{F}_k \right] \notag \\
&=   \mathbb{E}\left[ V_i \left(  \cup_{j=1}^{\geom_k} W^{(j)}  \right) \, | \, \mathcal{F}_k \right] + \mathbb{P}( \deg_{\mathscr S_k}(\mathbf{v}_{k+1}) = i \, | \, \mathcal{F}_k) + \mathbb{E}\Bigg[  \sum_{j= \geom_k + 2}^l \mathbf{1}_{ \{ \deg_{\mathscr{S}_k}(u_j) = i \}} \, | \, \mathcal{F}_k \Bigg]  \notag \\
\end{align}
The terms $\mathbb{E}\left[ V_i \left(  \cup_{j=1}^{\geom_k} W^{(j)}  \right) \, | \, \mathcal{F}_k \right]$ and $\mathbb{P}( \deg_{\mathscr S_k}(\mathbf{v}_{k+1}) = i \, | \, \mathcal{F}_k)$ were respectively computed in Equations (14) and (10) of \cite{EFMN}. They are given by
\begin{align} 
\mathbb{E}\left[ V_i \left(  \cup_{j=1}^{\geom_k} W^{(j)}  \right) \, | \, \mathcal{F}_k \right] &=  \frac{\widehat{p}_{i-1}}{\rho_k} (1 - \rho_k)^{i-1} + \mathcal{O}(N^{-\lambda}) \label{eq:Contrib0} \\ 
\mathbb{P}( \deg_{\mathscr S_k}(\mathbf{v}_{k+1}) = i \, | \, \mathcal{F}_k) &= \frac{\hat{p}_{i-1}}{\rho_k} \left( 1 - (1-\rho_k)^{i-1} \right)  + \mathcal{O}(N^{-\lambda}).   \label{eq:Contrib1}
\end{align}

For the last term, we use that with high probability, $u_{\geom_k+2}, \ldots, u_l$ are distinct vertices. All of them are connected to $\mathbf{v}_k$ through a uniform matching of half-edges. Therefore:
\begin{align}
\mathbb{E}\Bigg[  \sum_{j= \geom_k + 2}^l \mathbf{1}_{ \{ \deg_{\mathscr{S}_k}(u_j) = i \}} \, \Bigg| \, \mathcal{F}_k \Bigg] 
&= \mathbb{E}\left[ \deg_{\mathscr{S}_k}(\mathbf{v}_k) - \geom_k - 1 \, | \, \mathcal{F}_k \right] \hat{p}_{i-1}  + \mathcal{O}( N^{-\lambda}) \notag \\
&=  \frac{1}{\rho_k} \left( \hat{g}_k'(1) - 1 \right) \hat{p}_{i-1} + \mathcal{O}( N^{-\lambda}). \label{eq:Contrib2}
\end{align}
where the computation of $\mathbb{E}\left[ \deg_{\mathscr{S}_k}(\mathbf{v}_k) - \geom_k -1 \, | \, \mathcal{F}_k   \right]$ can be found in Equation (15) of \cite{EFMN}.

Finally, the contribution \eqref{eq4} is given by
\begin{align}
\mathbb{E} \Bigg[ \sum_{p = \geom_k + 2}^{l}  \sum_{q=1}^{k_p}
& \left( - \mathbf{1}_{ \deg_{\mathscr{S}_k}(u_p^q) =i } +  \mathbf{1}_{ \deg_{\mathscr{S}_k}(u_p^q)= i+1 } \right)  \Bigg| \mathcal{F}_k \Bigg] \notag \\ 
&= \mathbb{E}\left[ \deg_{\mathscr{S}_k}(\mathbf{v}_k) - \geom_k - 1 \, | \, \mathcal{F}_k \right] \hat{g}_k'(1) \left( - \hat{p}_{i-1} + \hat{p}_i \right) + \mathcal{O}(N^{-\lambda}) \notag \\
&=  \frac{1}{\rho_k}\left( \hat{g}_k'(1) -1  \right)  \hat{g}_k'(1) \left( - \hat{p}_{i-1} + \hat{p}_i \right) + \mathcal{O}(N^{-\lambda})           \label{eq:Contrib4}
\end{align}
Combining \eqref{eq:Contrib0}, \eqref{eq:Contrib1}, \eqref{eq:Contrib2} and \eqref{eq:Contrib4} we obtain
\begin{align} \label{eq:m=1}
\mathbb{E}\left[N_i(k  +1) -  N_i (k) \, | \, \mathcal{F}_k  \right] 
&= \frac{\hat{g}_k'(1)}{\rho_k} \left( - \hat{p}_{i-1} \hat{g}_k'(1) + \hat{p}_i (-1 + \hat{g}_k'(1))   \right) +  \mathcal{O}(N^{-\lambda})
\end{align}
giving Equation \eqref{eq:Wormald2}. 

\bigskip

We finally turn to the last claim of Theorem \ref{theo:fluidlimits}. For all $k < K_\varepsilon$, let $w(k)$ be the number of vertices that are not in the graph $\mathscr{S}_k$. Using the notation of \eqref{eq:notationsrenewal}, the evolution of $w(k)$ is given by
\begin{align*}
w(k+1) - w(k) 
&= V( \mathscr{S}_{k} \setminus \mathscr{S}_{k+1} ) \\
&= V\left( \cup_{j=1}^{\geom_k} W^{(j)} \right) + 1 + (l - \geom_k -1).
\end{align*}
Therefore, 
\begin{equation} \label{eq:wk}
\mathbb{E}\left[ w(k+1) - w(k) \, | \, \mathcal{F}_k \right] 
= \frac{1-\rho_k}{\rho_k} + 1 + \frac{1}{\rho_k}\left(  \hat{g}_k'(1) - 1 \right) 
= \frac{1}{\rho_k} \hat{g}_k'(1). 
\end{equation}
Using Wormald's differential equations method as before we deduce that with high probability
\[ w(k) = N \tilde{z}(k/N) +o(N), \]
where $\tilde{z}$ is the only solution of $y'(t) = \hat{g}_{(z_j(t))_{j \geq0}}'(1) / \rho_{(z_j(t))_{j \geq 0}}$ with initial condition $y(0)=1$.
\end{proof}

\subsection{The asymptotic degree distribution inside $\mathscr{S}_k$}\label{sec:proportion}

As in \cite{EFMN}, Theorem \ref{theo:fluidlimits} allows us to identify the laws of the graphs induced by sleeping vertices remaining after having explored a given proportion of vertices. Remarkably, these laws are the same as the laws appearing in the Depth First Search algorithm in Theorem 1 of \cite{EFMN}. We will see in the next section that the speed at which the graph is explored is however different.

\begin{theorem}\label{theo:law2}
Recall the definition of $\alpha_c$ and of $g(\alpha,s)$ given in Theorem \ref{theo:main}.
For every $\alpha \in [0,\alpha_c]$, let $\bm \pi_\alpha$ be the probability distribution on $\mathbb{Z}_+$ with generating series $g(\alpha,s)$.
Then, for every $\alpha \in [0,\alpha_c]$, denoting $\tau^{(N)}(\alpha) = \inf \{ n \geq 1, \, |S_n | \leq (1-\alpha) N   \}$, the empirical degree distribution of the graph induced by the vertices of $S_{ \tau^{(N)}(\alpha) }$ converges to $\bm \pi_\alpha$.
\end{theorem}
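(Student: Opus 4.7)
The plan is to reduce Theorem~\ref{theo:law2} to its analogue for the pure depth-first exploration established in Theorem~1 of \cite{EFMN}, exploiting the fact that the generating series of the sleeping degree sequence satisfies the same partial differential equation in both settings, up to a time change.

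First, I would reduce the stopping time $\tau^{(N)}(\alpha)$ to a ladder time. Since $|S_n|$ is non-increasing in $n$ with increments bounded by $N^{1/\gamma}$ by Assumption~\textbf{(A4)}, and since on the good event $\mathbf{G}_\varepsilon$ consecutive ladder times are separated by at most $N^\beta$, one can find an integer $k^\star = k^\star(\alpha,N)$ such that $\tau^{(N)}(\alpha) = T_{k^\star} + o(N)$. The last part of Theorem~\ref{theo:fluidlimits} controls $w(k)$, the number of vertices already removed at the $k$-th ladder time; writing $|S_{T_k-1}| = N - w(k)$ up to the $O(N^{1/\gamma}) = o(N)$ size of the current active list gives $k^\star/N \to \kappa(\alpha)$ in probability, for an explicit deterministic function $\kappa(\alpha)$ of $\alpha$ determined by $\tilde z$.

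Next, I would read off the asymptotic empirical degree distribution at this ladder time. By construction the subgraph induced by $S_{T_k-1}$ is a configuration model with degree sequence $(\widetilde N_i(k))_{i\geq 0}$, and Assumption~\textbf{(A4)} ensures $|\widetilde N_i(k) - N_i(k)| = O(N^{1/\gamma}) = o(N)$, so it suffices to work with $(N_i(k))_{i \geq 0}$. Theorem~\ref{theo:fluidlimits} then yields, with high probability,
\[
\frac{N_i(k^\star)}{\sum_{j \geq 0} N_j(k^\star)} \;\underset{N\to\infty}{\longrightarrow}\; \frac{z_i(\kappa(\alpha))}{\sum_{j \geq 0} z_j(\kappa(\alpha))},
\]
and the generating series of this limiting distribution is $F(\kappa(\alpha),s)/F(\kappa(\alpha),1)$, with $F$ defined by \eqref{eq:F}.

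The last step, which I expect to be the crux, is to identify this limit with $g(\alpha,s)$. From the proof of Lemma~\ref{lemma: EqDiff2}, $F$ solves a PDE which, after the time reparametrisation introduced there, becomes exactly equation~\eqref{eq:EqDiff}. This is precisely the PDE governing the sleeping degree generating series in the depth-first analysis of \cite{EFMN}, with the same initial datum $f_{\bm \pi}$. Section~6.2 of \cite{EFMN} solves this PDE explicitly and shows that, once the solution is normalised by its value at $s=1$ and reparametrised by the proportion $\alpha$ of removed vertices rather than by time, it equals $g(\alpha,s)$. The main obstacle is precisely this bookkeeping: one must verify that the time change absorbing $\hat g'(1)/\rho$ in the passage from the PDE of Lemma~\ref{lemma: EqDiff2} to \eqref{eq:EqDiff}, together with the relation $w(k) \sim N \alpha$ between the ladder index and the fraction of removed vertices, matches the reparametrisation used in \cite{EFMN}, so that our limit is exactly $g(\alpha,\cdot)$ and not some other function solving the same PDE along a different parametrisation. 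This is also the mechanism behind the remark that the two algorithms produce the same laws of sleeping subgraphs but explore the graph at different speeds.
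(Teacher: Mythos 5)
Your proposal matches the paper's proof in all essentials: you reduce $\tau^{(N)}(\alpha)$ to a ladder time via $\tilde z$ (the paper writes $\tau^{(N)}(\alpha) = T_{N\tilde z^{-1}(\alpha)} + o(N)$, your $\kappa(\alpha)$ is $\tilde z^{-1}(\alpha)$), invoke the fluid limit of Theorem~\ref{theo:fluidlimits} for the $N_i$'s, and identify the limiting generating series with $g(\alpha,\cdot)$ by observing that the $\alpha$-reparametrisation turns the PDE of Lemma~\ref{lemma: EqDiff2} into \eqref{eq:EqDiff}, which coincides with the equation solved in \cite{EFMN}. The only minor point the paper makes explicit that you leave implicit is the preliminary check that the induced subgraph at $\tau^{(N)}(\alpha)$ is still supercritical, so that $\tau^{(N)}(\alpha) < n_\varepsilon$ and Theorem~\ref{theo:fluidlimits} applies; otherwise the approach is the same.
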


\begin{proof}
Fix $\alpha \in [0, \alpha_c)$, by definition, a configuration model with asymptotic degree distribution $\bm \pi_\alpha$ is supercritical. Therefore,
\[
\frac{
\mathbb E \left[ D_{\tau^{(N)}(\alpha)} (D_{\tau^{(N)}(\alpha)}-1) \right]
}
{
\mathbb E \left[ D_{\tau^{(N)}(\alpha)} \right]
}
> 1+ \varepsilon
\]
for some $\varepsilon > 0$ and $N$ large enough, which ensures that $\tau^{(N)}(\alpha) < n_\varepsilon$.

By definition of $\tilde{z}$, this also ensures that $N \tilde{z}^{-1}(\alpha) < K_\varepsilon$ and that the proportion of explored vertices at time $T_{N \tilde{z}^{-1}(\alpha)}$ is $\alpha N + o(N)$. Therefore, $\tau^{(N)}(\alpha) = \inf\{ n \geq 0, \, |S_n| \leq (1-\alpha)N \} = T_{N \tilde{z}^{-1}(\alpha)} + o(N)$ and for all $i \geq 0$,
\begin{align*}
\left| V_i\left( S_{\tau^{(N)}(\alpha)} \right) \right|
&= N_i\left(  T_{N \tilde{z}^{-1}(\alpha)} + o(N)  \right) \\
&= N z_i \left( \tilde{z}^{-1}(\alpha) \right) + o(N).
\end{align*}
It is easy to check that the generating series associated to the sequence of functions $(z_i \circ \tilde{z}^{-1})_{ i \geq 0}$ is solution to the partial differential equation \eqref{eq:EqDiff}.
\end{proof}

\subsection{Proof of Theorem \ref{th:profile}}

Let $N\geq 1$. By definition, for all $1 \leq k \leq K_\varepsilon$, the contour process of the tree constructed by our algorithm at time $T_k$ is located at point $(T_k,k)$. Furthermore, by Theorem \ref{theo:fluidlimits}, 
\[  (T_k,k) = N \left( z\left( \frac{k}{N}\right) + o(1), \frac{k}{N} \right) .   \]
Note that $|T_{k+1} - T_k| = o(N)$ and that, between two consecutive $T_k$'s, the contour process cannot fluctuate by more than $o(N)$. Hence, after normalization by $N$, in its increasing part the limiting contour process converges to the curve $(z(t),t)$ where $t$ ranges from $0$ to $t_{\max} = \sup \{ t > 0, \, z'(t) < +\infty   \}$. In the remaining, we provide a parametrization $(x^\uparrow(\rho),y^\uparrow(\rho))_{ \rho \in (0, \rho_{ \bm \pi}]}$ of this increasing curve. Recalling the definition of $f$ given in Equation \eqref{eq:deff2}, our Theorem \ref{theo:fluidlimits} gives us
\[
\frac{(x^\uparrow)'(\rho)}{(y^\uparrow)'(\rho)} = \frac{2}{\rho}-1.
\]
exactly as in \cite{EFMN}.

To obtain a second equation involving $(x^\uparrow)'(\rho)$ or $(y^\uparrow)'(\rho)$, the paper \cite{EFMN} uses the implicit function $\alpha(\rho)$ given by the only solution of $1-\rho = \hat g(\alpha,1-\rho)$.
The link between $\alpha,x^\uparrow,y^\uparrow$ is however different in our setting. In order to establish this link, let us first notice that $\left(\rho(t) = \rho_{(z_i(t))_{i \geq 0}}\right)_t$ is the fluid limit of the survival probability $(\rho_k)_k$ as $N$ tends to infinity, where we recall that the $\rho_k$'s are defined in terms of the functions $g_k$ (see Equation \eqref{eq:gk}). Indeed, since for all $t \geq 0$, we have
\[ 1 - \rho_{\lfloor tN \rfloor} = \hat{g}_{\lfloor tN \rfloor}(1 - \rho_{\lfloor tN \rfloor}),  \]
the fact that the sequence of generating series $(\hat{g}_{\lfloor tN \rfloor} (s))_{N \geq 0}$ converges to $\partial_sF(t,s)/\partial_sF(t,1)$ (recall that $F$ is defined in Equation \eqref{eq:F}) as $N$ tends to infinity and an application of Dini's Theorem ensures that $\rho_{\lfloor tN \rfloor}$ converges to $\rho(t)$. For all $N \geq 0$, we define the function $\alpha^{(N)}$ by
\[ \forall \rho \in (0, \rho_{\bm \pi}], \quad \alpha^{(N)}(\rho) = 1 - \frac{|\mathscr{S}_{k(\rho)}|}{N} \quad \text{where} \quad
k(\rho) = 
\underset{0 \leq k \leq N}{\mathrm{argmin}} \{ | 1 - \rho - \hat{g}_{k}(1-\rho) |\}. \]
From this definition, it is clear that $\alpha^{(N)}$ converges to the implicit function $\alpha$.
Moreover, by definition of $w$ in Section \ref{sec:proportion}, we have that
\[  \forall k \in \llbracket 0 , K_\varepsilon \rrbracket, \quad w(k) = N \alpha^{(N)}(\rho_k), \]
giving that $\tilde{z}(t) = \alpha( \rho(t))$. On the other hand, using Theorem \ref{theo:fluidlimits}, we deduce that
\[  \frac{\mathrm{d}}{\mathrm{d}t}\tilde{z}(t) = \frac{1}{\rho(t)} \partial_s \hat{g}( \alpha(\rho(t)) ,1), \]
which finally yields
\begin{equation}
\label{eq:y'}
y'(\rho) =  \frac{\rho \alpha'(\rho)}{\partial_s \hat g (\alpha(\rho),1)}.
\end{equation}

The decreasing part, is obtained by translating horizontally each point $(x^\uparrow(\rho),y^\uparrow(\rho))$ of the ascending phase  to the right by  twice the asymptotic proportion of the giant component of the remaining graph of parameter $\rho$, which is $2(1-g(\alpha(\rho),1-\rho))$. Indeed, the time it takes to the DFS to return at a given height $k$ attained during the ascending phase corresponds to the time of exploration of the giant component of the unexplored graph at time $T_k$. The latter is given by twice the number of vertices of the giant component which is equal to $2 ( 1 - g_k(1 - \rho_k) )$.

\subsection{From induced paths to induced cycles}\label{sec:inducedpathcycle}

We are going to show that, with high probability, one of the first vertices of the spine of the tree constructed by our algorithm shares a common neighbor with one of the last vertices of the spine and that this common neighbor is not connected to another vertex of the spine. See Figure \ref{fig:spine} left for an illustration. This ensures that our bound on the length of the longest induced path is also valid for the longest induced cycle.

\begin{figure}[!h]
\begin{center}
\includegraphics[scale=0.8]{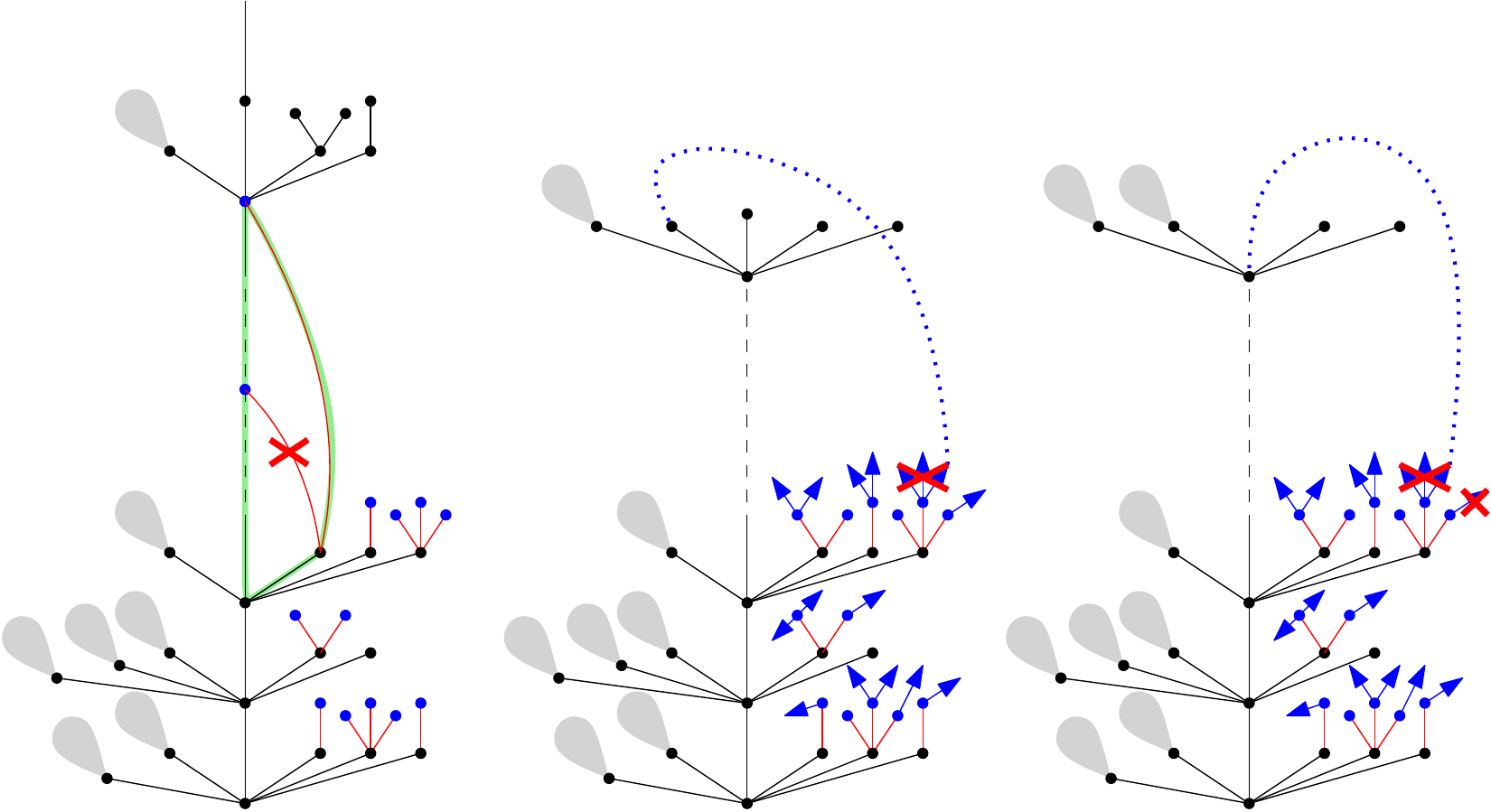}
\caption{\label{fig:spine} Left: an induced cycle constructed by the algorithm in green. Center: a matching to a half edge of $E^\varepsilon$ for a vertex that is not on the spine. Right: a matching to a half edge of $E^\varepsilon$ for a vertex of the spine.}
\end{center}
\end{figure}

Recall the definition of the ladder times $T_k$, of the associated vertices $\mathbf v_k$ on the spine of the tree and of $\mathbf m_{\mathbf v_k} = \left((u_1,(u_1^1, \ldots, u_1^{k_1})), \ldots, (u_l,(u_l^1, \ldots, u_l^{k_l})) \right)$ given in Equation \eqref{eq:notationsrenewal}. The candidates for a common neighbor between vertices of the spine are $u_{\mathfrak e_k + 2} , \ldots , u_l$ (recall that $u_{\mathfrak e_k + 1} = v_{k+1}$). These vertices are connected to $\mathbf v_k$ and to a half edge of each of the vertices $u_i^j$ for $i \in \llbracket \mathfrak e_k + 2 , l \rrbracket$. The induced cycles we are interested in are when one of these vertices $u_i^j$ is on the spine of the tree. We denote $E_k$ the set of half edges of the vertices $u_i^j$ for $i \in \llbracket \mathfrak e_k + 2 , l \rrbracket$ not yet matched. For $\varepsilon > 0$ small enough, the number of half edges of $E^\varepsilon = \bigcup_{k=1}^{\varepsilon N} E_k$ that are still unmatched at time $T_{\varepsilon N}$ is larger than $\eta N$ for some $\eta >0$. For $\varepsilon N \leq k \leq K_\varepsilon$, we denote by $\eta_k N$ the number of half edges of $E^\varepsilon$ that are still unmatched at time $T_k$ and such that the corresponding vertex has not been connected to a vertex of the spine between heights $\varepsilon N$ and $k$. The evolution of $\eta_k$ can be studied with Wormald's differential equation method in a similar fashion as in the proof of Theorem \ref{theo:fluidlimits}. We have:
\begin{align*}
\mathbb E & \left[ \eta_{k+1} N - \eta_k N \middle| \mathcal F_k \right] \\
& \geq
- \left[
\left(
\frac{1-\rho_k}{\rho_k} + \frac{1}{\rho_k} (\hat g'_k(1) - 1)
\right) \frac{\hat g_0 '(1)}{ 1 - \hat g _0(0)}
+ \left(
\frac{\hat g_0 '(1)}{ 1 - \hat g _0(0)}
\right)^2
\right]
\frac{\eta_k \, N}{|\mathscr S _k| \, g'_k(1)}
+ O \left( N^{-\lambda} \right).
\end{align*}
Indeed, each new matching of the algorithm has a probability $\frac{\eta_k \, N}{|\mathscr S _k| \, g'_k(1)}$ to be with an available half edge of $E^\varepsilon$. The first term $\left(
\frac{1-\rho_k}{\rho_k} + \frac{1}{\rho_k} (\hat g'_k(1) - 1)
\right) \frac{\hat g_0 '(1)}{ 1 - \hat g _0(0)}$ corresponds to the number of matchings along the exploration between $T_k$ and $T_{k+1}$, except for the matching of $\mathbf v_k$ and $\mathbf v_{k+1}$ (see Figure \ref{fig:spine}, center). For such matchings, we have to remove every other half edge of the discovered vertex from $E^\varepsilon$, whose expected number is roughly the expectation of $\hat \pi$ conditioned to be nonzero since it was fixed at the very begining of the construction (note that these half edges are still available for the construction).
If $\mathbf v_k$ and $\mathbf v_{k+1}$ are matched by a half edge of $E^\varepsilon$, this means that $v_{k+1}$ is a neighbor of a vertex $u'$ at distance $1$ from the first $\varepsilon N$ first vertices of the spine. In this case, we have to remove from $E^\varepsilon$ every half edge of $\mathbf v_{k+1}$ and every half edge of $E^\varepsilon$ belonging to a vertex connected to $u'$ (see Figure \ref{fig:spine}, right). The expectation of the number of these removed edges is bounded from above by the square expectation of $\hat \pi$ conditioned to be non $0$ (since some some half edges may have been matched previously).

As long as $k \leq K_\varepsilon$, the factor in front of $\eta_k$ in the above equation is uniformly bounded in absolute value by a finite constant $c_\varepsilon$ depending of $\varepsilon$. This ensures that $(\eta_{\lfloor t N \rfloor})_t$ is stochastically dominated by a process that has a fluid limit that is smaller than the solution of the differential equation $y' = - c_\varepsilon y$ as long as $t < K_\varepsilon /N$, thus $\eta_{K_\varepsilon} > \eta \exp(-c_\varepsilon t_{max})$ with high probability.

We established that at the ladder time $T_{K_\varepsilon}$, with high probability, at least $\eta \exp(-c_\varepsilon t_{max}) N$ available half edges that belong to some vertex $u$ are such that:
\begin{itemize}
\item The vertex $u$ is connected to a vertex $u'$ which is itself connected to the spine at height lower than $\varepsilon N$.
\item The vertex $u'$ is not connected to any vertex of the spine between heights $\varepsilon N + 1$ and $K_\varepsilon$.
\end{itemize}
For $k > K_\varepsilon$, each vertex of the spine $\mathbf v_k$ has a probability roughly $\frac{\eta \exp(-c_\varepsilon t_{max})}{|\mathscr S _k| / N} = \mathcal O (1)$ to form with $E^\varepsilon$ an induced cycle of length at least $K_\varepsilon - \varepsilon N$. Taking first the limit $N\to \infty$ and then $\varepsilon \to 0$ proves that our algorithm constructs an induced cycle of the same macroscopic length as the spine.

\section{Comparison with Frieze and Jackson's algorithm} \label{sec:comp}

In \cite{MR918397}, Frieze and Jackson study the following variant of the Depth-First algorithm that constructs a subtree of each connected component of the graph: Perform a depth-first exploration of the graph, and each time a new vertex is discovered, ask if it is connected to a vertex belonging to its ancestral line in the exploration tree. If it is the case, delete this vertex from the exploration tree and backtrack. The ancestral lines of trees constructed by this algorithm are induced paths by construction, however they are not necessarily spanning trees of the corresponding connected components.

Our algorithm and Frieze and Jackson's algorithm do not, in general, provide the same induced paths on some deterministic examples of graphs. However, for configuration graphs satifying assumptions {\bf (A1), (A2), (A3), (A4)}, with high probability, they will construct two trees with identical spines up to a microscopic number of vertices at the top. To see this, we first construct the graph and our tree with the algorithm of Section \ref{sec:algorithm}, and then perform Frieze and Jackson's algorithm on the resulting graph and starting at the same initial vertex of the giant component. The construction of the graph gives a total order on its vertices according to their first appearance in the contour of the spanning tree. We use this order to choose between neighbors in the DFS performed by Frieze and Jackson.

Fix $\varepsilon >0$. The following statement on the Frieze and Jackson's algorithm can be proved by induction. With high probability, for all $k < K_\varepsilon$,
\begin{itemize}
\item The exploration goes from $\mathbf v_k$ to $\mathbf v_{k+1}$,
\item The vertices explored between the first visit of $\mathbf v_k$ and the first visit of $\mathbf v_{k+1}$ form a subset of the vertices explored between the times $T_k$ and $T_{k+1}$ of our algorithm.
\end{itemize}
Indeed, if Frieze and Jackson's algorithm visits $\mathbf v_k$, at the first visit of this vertex, the vertices that have not been explored by this algorithm are the vertices of $\mathscr S_k$, a subset of the vertices that our algorithm has explored before time $T_k$, and the vertices attached to the spine before $\mathbf v_k$. The vertices already explored by our algorithm that are still available for Frieze and Jackson's algorithm belong to small connected components inside past graphs $\mathscr S_i$ for some $i<k$ and will never be explored by Frieze and Jackson's algorithm. The vertices attached to the spine form cycles in the graph. They are therefore deleted if explored by Frieze and Jackson's algorithm. Thus, after the first visit of $\mathbf v_k$, the vertices that are truly available for Frieze and Jackson's algorithm are the vertices of $\mathscr S_k$ and the induction follows.

Finally, from $\mathbf v_{K_\varepsilon}$, Frieze and Jackson's algorithm explores a subgraph of $\mathscr S_{K_\varepsilon}$ and the length of the induced path cannot be increased by more than the size of its giant component. This giant component has a size $C_\varepsilon N$ with $C_\varepsilon \to 0$ as $\varepsilon \to 0$.

\section{Extension to $m$-induced cycles} \label{sec:mcycles}
Let $m \geq 1$. An $m$-induced cycle inside a given graph $\mathrm{G}$ is a cycle such that two vertices separated by $k$ edges of the cycle have a distance at least $\min \{m,k\}$ in the graph $\mathrm{G}$. When $m=1$, we retrieve the definition of induced cycles. In this last section, we briefly explain how to adapt our arguments in order to find the following lower bound on the length of the longest $m$-induced cycle in a configuration model:

\begin{prop} \label{prop:mspine}
Let $\mathcal H_N^{m}$ be the length of the longest $m$-induced cycle in in $\mathscr{C}(\mathbf{d}^{(N)})$ satisfying assumptions {\bf (A1), (A2), (A3), (A4)}. Then, 
\[ \forall \varepsilon>0, \quad \mathbb{P} \left(  \frac{\mathcal{H}_N^m}{N}  \geq m 
 \displaystyle{\int_0^{\rho_{\bm \pi}} \frac{u \, \alpha'(u)}{\sum_{j=1}^m (\partial_s \hat g (\alpha(u),1))^j} \mathrm{d}u} -\varepsilon  \right)  \underset{N \rightarrow +\infty}{\longrightarrow} 1  . \]
\end{prop}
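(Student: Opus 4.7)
The plan is to extend the algorithm of Section \ref{sec:algorithm} into a mixed depth-first / breadth-first procedure, in which the structure $\mathbf m_v$ is replaced by a list $\mathbf m_v^{(m)}$ encoding an $m$-level BFS from $v$: level $1$ is produced by matching $v$'s half-edges against sleeping vertices, and for $j = 2, \ldots, m$, level $j$ is produced by matching the unmatched half-edges of level-$(j-1)$ vertices against sleeping vertices. The depth-first descent then proceeds $m$ levels down the first branch of $\mathbf m_v^{(m)}$ before triggering a new BFS reveal, so that the ladder structure naturally splits into macro-steps of spine length $m$. Because each layer is drawn from the sleeping pool available at the beginning of the current macro-step, the $m$-ball freshly revealed around the current spine vertex is disjoint from the previously built spine, which, by the inductive argument at the end of Section \ref{sec:algorithm}, yields the $m$-induced property of the spine.

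The analysis then follows Section \ref{sec:analysis} with macro-ladder times $T_k^{(m)}$ in place of $T_k$. Iterating the computation of \eqref{eq:Contrib2} through the $m$ BFS layers, the $j$-th layer contributes an expected number of $(\hat g_k'(1))^{j}$ freshly exposed half-edges; combining with the small subcritical components attached at each layer, exactly as in the derivation of \eqref{eq:wk}, gives
\[
\mathbb E\bigl[ w(k+1) - w(k) \bigm| \mathcal F_k \bigr] = \frac{1}{\rho_k} \sum_{j=1}^{m} \bigl( \hat g_k'(1) \bigr)^{j} + \mathcal O(N^{-\lambda}).
\]
Analogous formulas for $\mathbb E[N_i(k+1) - N_i(k) \mid \mathcal F_k]$ are obtained by summing the contributions \eqref{eq:Contrib1}--\eqref{eq:Contrib4} over the $m$ layers, and the extended ODE system is well posed by the Cauchy--Lipschitz argument of Lemma \ref{lemma: EqDiff2} applied to the generating series $F(t,s)$. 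Since the spine grows by exactly $m$ vertices per macro-step while the sleeping pool shrinks at the above rate, the analogue of \eqref{eq:y'} reads
\[
y'(\rho) = \frac{m \, \rho \, \alpha'(\rho)}{\sum_{j=1}^{m} \bigl( \partial_s \hat g(\alpha(\rho),1) \bigr)^{j}},
\]
whose integration over $[0, \rho_{\bm \pi}]$ gives the lower bound of the proposition for the longest $m$-induced path. The passage to $m$-induced cycles is obtained by replaying the argument of Section \ref{sec:inducedpathcycle} on the $m$-periphery rather than the $2$-periphery of the early spine; the $\mathcal O(\varepsilon N)$ vertices near both ends are again sacrificed as $o(N)$ terms.

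The main technical obstacle is the uniform control, across the $\mathcal O(N)$ macro-steps with $k \leq K_\varepsilon$, of the bad events where the BFS closes a cycle inside the current $m$-ball or where two macro-steps reveal overlapping vertices. Under Assumption {\bf (A4)}, each such event has conditional probability $\mathcal O(N^{-\lambda})$ per macro-step, and a union bound generalizing \eqref{lemma: estimate good event} shows that their cumulative contribution to the fluid limit is negligible. A minor additional technicality is that the interpolation of the contour process inside a macro-step has to be bounded by $o(N)$ for the convergence of Theorem \ref{th:profile} to remain uniform, which follows from $m$ being fixed and from Assumption {\bf (A4)}.
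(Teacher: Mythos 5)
Your proposal follows essentially the same route as the paper: you extend the algorithm to an interleaved BFS/DFS that reveals an $m$-ball at each macro-step, you track the degree-type counts and the sleeping pool through the macro-ladder times, you derive the same drift $\frac{1}{\rho_k}\sum_{j=1}^m(\hat g_k'(1))^j$ for the explored-vertex count, you reduce the fluid limit to the same PDE up to a time change, and you pick up the factor $m$ from the spine advancing by $m$ per macro-step. The one place where your bookkeeping is organized differently is in the $N_i$ drift: the paper decomposes the expected change by the $m$ vertices $\mathbf w_{k,m'}$ of the ancestral line (separating, for each such vertex, the completely explored components on its left from the boundary layer of its right-hand subtrees), whereas you decompose by BFS layer and sum the analogues of \eqref{eq:Contrib1}--\eqref{eq:Contrib4}. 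These two decompositions give the same total and are both valid; the ancestral-line decomposition is arguably cleaner because it makes the left/right asymmetry (full removal versus boundary removal) explicit at each height, which is where the $(\hat g_k'(1))^{m-m'}$ weights actually come from. Two small cautions on your sketch: the claim that freshness of the $m$-ball ``by the inductive argument at the end of Section \ref{sec:algorithm}'' gives the $m$-induced property deserves a word on why spine vertices more than $m$ apart along the spine cannot be at graph distance $<m$ (the key being that each new macro-step reveals a tree neighborhood disjoint from the earlier spine except through $\mathbf v_k$); and the passage to cycles should be matched to the argument of Section \ref{sec:inducedpathcycle} more carefully, since the relevant set $E^\varepsilon$ must now be defined at the $m$-periphery and the bad events that merge two macro-balls need the Wormald-style domination rather than a bare union bound.
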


\begin{proof}
This bound comes from the following generalization of the algorithm defined in Section \ref{sec:algorithm}. The idea is to interpolate between a depth-first and a breadth-first exploration. Each time the exploration goes to a new vertex $v$, it discovers its $m$-neighborhood by a breadth-first exploration creating a rooted plane tree $\mathcal T_v$ with root $v$ and height at most $m$. We denote by $v_1, \ldots ,v_l$ the vertices of height $m$ in $\mathcal T_v$ listed in lexicographic order (that is in order of their discovery).
Similarly as for in Section \ref{sec:algorithm}, for each $1 \leq i \leq l$, we successively match the half-edges of $v_i$ to uniform half-edges of the graph and denote by $v_i^1, \ldots, v_i^{k_i}$ the corresponding vertices and write $\mathbf{m}_v = \left((v_1,(v_1^1, \ldots, v_1^{k_1})), \ldots, (v_l,(v_l^1, \ldots, v_l^{k_l})) \right)$. The evolution of active and sleeping vertices follows the same rules as before except that when a new vertex $v$ is explored, every vertex of the tree $\mathcal T_v$ is removed from the set of sleeping vertices.

\bigskip

\begin{figure}[!h]
\begin{center}
\includegraphics[scale=1]{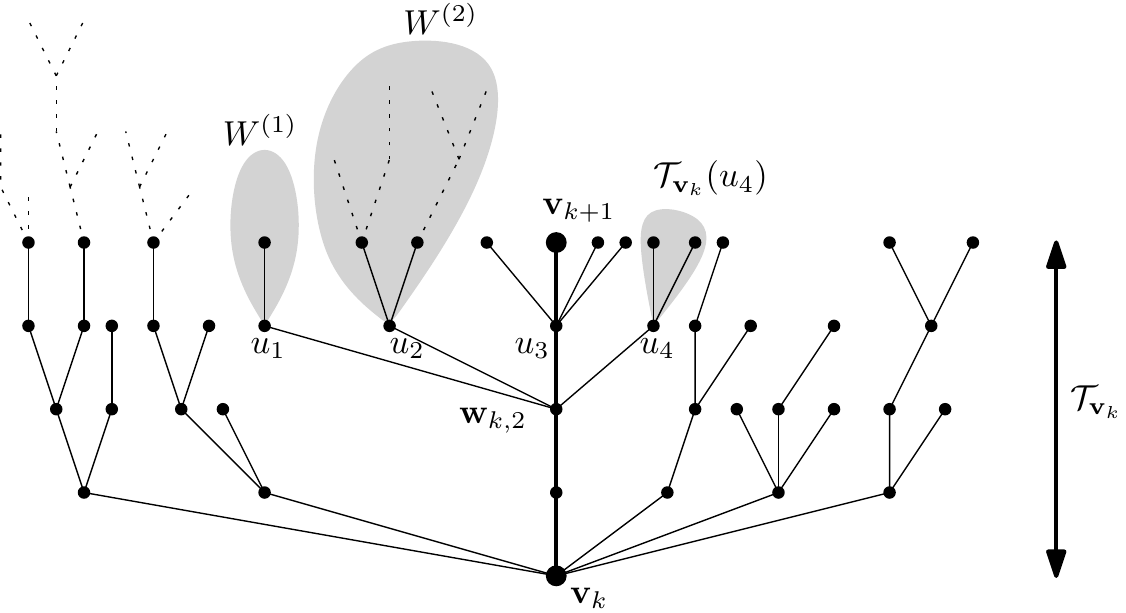}
\caption{\label{fig:m-spine} Illustration of the proof of Proposition \ref{prop:mspine} for $m=4$ and $m'=2$. Dotted lines are explored between times $T_k$ and $T_{k+1}$.}
\end{center}
\end{figure}

We can define similar ladder times $(T_k)$ for when the exploration discovers a vertex $\mathbf{v}_k$ of the giant component of the graph induced by the sleeping vertices. We denote by $\mathbf w _{k, 0} =  \mathbf{v}_k ; \mathbf w _{k, 2} ; \ldots ; \mathbf w _{k, m} =  \mathbf{v}_{k+1}$ the ancestral line between $\mathbf{v}_k$ and $\mathbf{v}_{k+1}$. Between $T_k$ and $T_{k+1}$, the algorithm explores completely the connected components of the vertices on the left hand side of this ancestral line, and up to a distance between $m$ and $1$ for the right hand side (see Figure \ref{fig:m-spine} for an illustration).
Fix $m'$ between $0$ and $m-1$ and denote $u_1, \ldots, u_l$ the children of $\mathbf w _{k, m'}$ in $\mathcal{T}_{\mathbf v_k}$. Let $\geom_{m'}$ be such that $u_{\geom_{m'} +1} = \mathbf w _{k, m'+1}$. For $j \leq \geom_{m'}$, we also denote by $W^{(j)}$ the connected components of $u_j$ explored by the algorithm. Finally, for $j > \geom_{m'} +1$, we denote by  $\mathcal{T}_{\mathbf v_k}(u_{j})$ the subtree of $\mathcal{T}_{\mathbf v_k}$ emanating from $u_j$.
The contribution of vertices attached to $\mathbf w _{k, m'}$ to the evolution of vertices of degree $i$ is given by:
\begin{align*}
\Delta_i (\mathbf w _{k, m'}) =
& - V_i\left( \cup_{j=1}^{\geom_{m'}} W^{(j)} \right) - \mathbf{1}_{\{ \deg(w _{k, m'+1})=i \}}  \\
& - \sum\limits_{ x \in \mathcal{T}_{\mathbf v_k}(u_{\geom_{m'} +2}) \cup \cdots \cup \mathcal{T}_{\mathbf v_k}(u_l)} \mathbf{1}_{ \deg(x)=i} \\
& +  \sum\limits_{ x \in \partial \mathcal{T}_{\mathbf v_k}(u_{\geom_k +2}) \cup \cdots \cup \partial \mathcal{T}_{\mathbf v_k}(u_l)}
\left( - \mathbf{1}_{ \deg(x) =i } + \mathbf{1}_{ \deg(x)= i+1 } \right) .                
\end{align*}

Computations analogous to those made during the proof of Theorem \ref{theo:fluidlimits} yield
\begin{align*}
\mathbb{E}\left[\Delta_i(\mathbf w _{k, m'}) \, | \, \mathcal{F}_k  \right] = 
& -\frac{\hat{p}_{i-1}}{\rho_k} \\
& - \frac{\hat{p}_{i-1}}{\rho_k}\left( \hat{g}_k'(1) -1 \right) \left[ \sum\limits_{j=0}^{m-m'-1} (\hat{g}_k'(1))^j  \right] \\
& - \frac{1}{\rho_k}\left( \hat{g}_k'(1) -1 \right) (\hat{g}_k'(1))^{m-m'} \left( \hat{p}_{i-1} - \hat{p}_i  \right)\\
& + \mathcal O (N^{-\lambda}).
\end{align*}
Simplifying the above expression, we obtain an equation similar to \eqref{eq:m=1} :
\begin{align} 
\mathbb{E}\left[\Delta_i(\mathbf w _{k, m'}) \, | \, \mathcal{F}_k  \right] 
&= \frac{\hat{p}_{i-1}}{\rho_k} \left(  - (\hat{g}_k'(1))^{m-m'+1}  \right) + \frac{\hat{p}_i}{\rho_k} (\hat{g}_k'(1))^{m-m'} \left( -1 + \hat{g}_k'(1) \right) + \mathcal O (N^{-\lambda})\notag \\
&= \frac{(\hat{g}_k'(1))^{m-m'}}{\rho_k} \left( - \hat{p}_{i-1} \hat{g}_k'(1) + \hat{p}_i (-1 + \hat{g}_k'(1))   \right) + \mathcal O (N^{-\lambda}). \notag
\end{align}
Summing for $m'$ between $0$ and $m-1$ finally gives:
\begin{align} 
\mathbb{E}\left[N_i(k+1) - N_i(k) \, | \, \mathcal{F}_k  \right] 
&= \frac{\sum_{j=1}^{m}(\hat{g}_k'(1))^{j}}{\rho_k} \left( - \hat{p}_{i-1} \hat{g}_k'(1) + \hat{p}_i (-1 + \hat{g}_k'(1))   \right) + \mathcal O (N^{-\lambda}). \notag
\end{align}

Using the same arguments as in the proof of Theorem \ref{theo:fluidlimits}, we deduce that for all $i \geq 0$, the function $t \rightarrow N_i(\lfloor tN \rfloor) / N$ converges pointwise towards a function $z_i$ with high probability. The corresponding sequence of functions $(z_i)_{i \geq 0}$ is the unique solution of an infinite system of differential equations, and their generating series $F(t,s)=\sum\limits_{i \geq 0} z_i(t) s^i$ satisfies the following equation:
\[ \frac{\partial F}{\partial t} (t,s) = \frac{1}{\rho_{ (z_j(t))_{j \geq 0} }} 
\left(
\sum_{m'=1}^m
\left( \frac{\frac{\partial^2 F}{\partial s^2} (t,1)}{\frac{\partial F}{\partial s}(t,1)} \right)^{m'} 
\right)
\frac{ \frac{\partial F}{\partial s} (t,s) }{\frac{\partial F}{\partial s} (t,1)} \left( (1-s) \frac{\frac{\partial^2 F}{\partial s^2} (t,1)}{\frac{\partial F}{\partial s} (t,1)} -1 \right). \]
Up to a time change, this is exactly Equation \ref{eq:F}. As it turns out, the resulting new time scale corresponds to the proportion of explored vertices during the exploration of the graph whose derivative is given by the analog of Equation \ref{eq:wk} which amounts here to the prefactor:
\[
\frac{1}{\rho_{ (z_j(t))_{j \geq 0} }} 
\left(
\sum_{m'=1}^m
\left( \frac{\frac{\partial^2 F}{\partial s^2} (t,1)}{\frac{\partial F}{\partial s}(t,1)} \right)^{m'} 
\right).
\]

\bigskip

The contour process of the spanning tree constructed by this new algorithm has therefore a fluid limit with two parametrized arcs $(x^\uparrow(\rho), y^\uparrow(\rho))$ and $(x^\downarrow(\rho), y^\downarrow(\rho))$ as in Theorem \ref{th:profile2}. The derivative of $y^\uparrow$ is given by
\[
(y^\uparrow)'(\rho) = m \cdot 
\frac{\rho \, \alpha'(\rho)}{\sum_{j=1}^m (\partial_s \hat g (\alpha(\rho),1))^j}
\]
which is the analog of Equation \eqref{eq:y'} in the current setting. Notice the factor $m$ which comes from the fact that the ancestral line between two vertices $\mathbf v_k$ and $\mathbf v_{k+1}$ has length $m$.
\end{proof}

\bibliographystyle{plain}
\bibliography{ConfigDFS.bib}

\bigskip

\noindent \textsc{Nathana\"el Enriquez:} \verb|nathanael.enriquez@math.u-psud.fr|,

\noindent \textsc{LMO, Univ. Paris Sud, 91405 Orsay France}

\medskip

\noindent \textsc{Gabriel Faraud:} \verb|gabriel.faraud@u-paris10.fr|,

\noindent \textsc{Modal'X, UPL, Univ. Paris Nanterre, F92000 Nanterre France}

\medskip

\noindent \textsc{Laurent M\'enard:} \verb|laurent.menard@normalesup.org|,

\noindent \textsc{Modal'X, UPL, Univ. Paris Nanterre, F92000 Nanterre France and New York University Shanghai, 200122 Shanghai China}

\medskip

\noindent \textsc{Nathan Noiry:} \verb|noirynathan@gmail.com|

\noindent \textsc{Telecom Paris, 91120 Palaiseau, France}

\end{document}